\theoremstyle{plain}
\newcommand\tbar[1]{\accentset{\rule{.4em}{.8pt}}{#1}}
\newcommand\ubar[1]{\underaccent{\bar}{#1}}
\newtheorem{lemma}{Lemma}[section]
\newtheorem{theorem}[lemma]{Theorem}
\newtheorem{cor}[lemma]{Corollary}
\newtheorem{prop}[lemma]{Proposition}
\newtheorem{exam}[lemma]{\normalfont \scshape
 Example}
\newtheorem{rem}[lemma]{\normalfont \scshape Remark}
\newcommand{\R}{\mathbb{R}}
\newcommand{\N}{\mathbb{N}}
\newcommand{\Z}{\mathbb{Z}}
\newcommand{\norm}[1]{\left\Vert#1\right\Vert}
\newcommand{\dnormf}[1]{\wr\!\!\wr#1\wr\!\!\wr}
\newcommand{\abs}[1]{\left\vert#1\right\vert}
\newcommand{\set}[1]{\left\{#1\right\}}
\newcommand{\eps}{\varepsilon}
\newcommand{\bfx}{\bm{x}}
\newcommand{\bfzero}{\bm{0}}
\newcommand{\bfinfty}{\bm{\infty}}
\newcommand{\bfone}{\bm{1}}
\newcommand{\bfa}{\bm{a}}
\newcommand{\bfb}{\bm{b}}
\newcommand{\bfU}{\bm{U}}
\newcommand{\bfu}{\bm{u}}
\newcommand{\bfX}{\bm{X}}
\newcommand{\bfy}{\bm{y}}
\newcommand{\bfeta}{\bm{\eta}}
\newcommand{\bfm}{\bm{m}}
\newcommand{\bE}{\bar E^-(S)}
\newcommand{\bC}{\bar C^-(S)}
\renewcommand{\d}{\mathrm{d}}
\DeclareMathOperator{\Geom}{Geom}
\begin{document}

\title[On functional records and champions]{On functional records and champions}

\author{Cl\'{e}ment Dombry, Michael Falk, Maximilian Zott}
\address{Laboratoire de Math\'{e}matiques, UFR Sciences et Techniques, Universit\'{e} de Franche-Comt\'{e}, France}
\email{clement.dombry@univ-fcomte.fr}
\address{Institute of Mathematics, W\"{u}rzburg, Germany}
\email{michael.falk@uni-wuerzburg.de, maximilian.zott@uni-wuerzburg.de}
\thanks{This work was supported by a research grant (VKR023480) for the second author from VILLUM FONDEN}%

\subjclass[2010]{Primary 60G70}%
\keywords{Champions and records $\bullet$ multivariate extreme value distribution $\bullet$ max-stable random vectors $\bullet$ $D$-norm $\bullet$ max-stable processes $\bullet$ max-domain of attraction}%


\begin{abstract}
Records among a sequence of iid random variables $X_1,X_2,\dotsc$ on the real line have been investigated extensively over the past decades. A record is defined as a random variable $X_n$ such that $X_n>\max(X_1,\dotsc,X_{n-1})$. Trying to generalize this concept to the case of random vectors, or even stochastic processes with continuous sample paths, the question arises how to define records in higher dimensions. We introduce two different concepts: A \emph{simple record} is meant to be a stochastic process (or a random vector) $\bm X_n$ that is larger than $\bm X_1,\dotsc,\bm X_{n-1}$ in at least one component, whereas a \emph{complete record} has to be larger than its predecessors in all components. The behavior of records is investigated. In particular, the probability that a stochastic process $\bm X_n$ is a record  as $n$ tends to infinity is studied, assuming that the processes are in the max-domain of attraction of a max-stable process. Furthermore, the distribution of $\bm X_n$, given that $\bm X_n$ is a record is derived.
\end{abstract}

\maketitle


\section{Introduction and Preliminaries}

\subsection*{Preliminaries}

Let $S$ be a compact metric space. A \emph{max-stable process} (MSP) $\bm \vartheta=(\vartheta_s)_{s\in S}$ is a stochastic process with non-degenerate univariate margins and sample paths in $C(S):=\{f\in\R^S:~f\text{ continuous}\}$ with the property that there are functions $a_n\in C^+(S):=\{f\in C(S):~f>0\}$, $b_n\in C(S)$, $n\in\N$, such that
\begin{equation}\label{eq:max_stable_process}
\max_{i=1,\dotsc,n}\frac{\bm \vartheta^{(i)}-b_n}{a_n}=_{\mathcal D}\bm\vartheta,
\end{equation}
where $\bm \vartheta ^{(1)},\dotsc,\bm \vartheta^{(n)}$ are independent and identically distributed (iid) copies of $\bm \vartheta$ and $=_{\mathcal D}$ denotes equality in distribution. Note that throughout this paper, each operation such as $\max$, $<$, $\geq$, and so on is meant component\-wise. The class of max-stable distributions coincides with the class of possible limit distributions of linearly standardized
maxima of iid processes, which makes it a class of outstanding interest for extreme value theory. Obviously, the univariate margins of an MSP are max-stable distributions on the real line, and hence belong to the class of either Fr\'{e}chet, Weibull or Gumbel type of distributions. An MSP $\bm\xi=(\xi_s)_{s\in S}$ in $C(S)$ is commonly called \emph{simple max-stable}, if each univariate margin is unit Fr\'{e}chet distributed, i.\,e. $P(\xi_s\leq x)=\exp\left(-x^{-1}\right)$, $x>0$, $s\in S$. Different to that, we call an MSP $\bm\eta=(\eta_s)_{s\in S}$ in $C(S)$ \emph{standard max-stable} (SMSP), if all univariate margins are standard negative exponentially distributed, i.\,e. $P(\eta_s\leq x)=\exp(x)$, $x\leq 0$, $s\in S$. In that case, $\bm\eta=_dn\max_{i=1,\dotsc,n}\bm\eta^{(i)}$ if $\bm\eta^{(1)},\dotsc,\bm\eta^{(n)}$ are iid copies of $\bm\eta$. It can be shown that a process $\bm\vartheta=(\vartheta_s)_{s\in S}$ with continuous sample paths and univariate margins $G_s(x)=P(\vartheta_s\leq x)$, $s\in S$, $x\in\R$, is an MSP iff $(\log(G_s(\vartheta_s)))_{s\in S}$ is an SMSP, see \citet{aulfahozo14}.

Denote by $E(S)$ the set of all real valued bounded functions with only finitely many discontinuities and define $\bar E^-(S):=\{f\in E(S):~f\leq 0\}$. We know from \citet{ginhv90} and \citet{aulfaho11} that a stochastic process $\bm\eta=(\eta_s)_{s\in S}$ is an SMSP iff there exists a stochastic process $\bm Z=(Z_s)_{s\in S}$ with sample paths in $\bar C^+(S):=\{f\in C(S):~f\geq 0\}$ and some constant $c\geq 1$ with $\sup_{s\in s}Z_s=c$ almost surely and $E(Z_s)=1$, $s\in S$, such that
\[
P(\bm\eta\leq f)=\exp\left(-\norm f_D\right):=\exp\left(-E\left(\sup_{s\in S}\abs{f(s)}Z_s\right)\right),\qquad f\in\bar E^-(S).
\]
Note that the condition $P\left(\sup_{s\in S}Z_s=c\right)=1$ can be weakened to $E\left(\sup_{s\in S}Z_s\right)<\infty$, see \citet{dehaf06}.

As a matter of fact, the mapping $\norm\cdot_D$ defines a norm on the linear space $E(S)$. We call it \emph{$D$-norm} with \emph{generator} $\bm Z$. Note that the distribution of a generator is not uniquely determined in general, i.\,e. there might be several different generators of one $D$-norm, but the condition $\sup_{s\in S}Z_s=c$ almost surely yields uniqueness.

The choice of the function space $E(S)$ instead of $C(S)$ may seem uncommon at first, but in fact it allows the smooth incorporation of the finitedimensional theory on max-stable distributions into the functional setup. By a suitable choice of $f\in E(S)$ we obtain for $s_1,\dotsc,s_d\in S$ and $\bm x=(x_1,\dotsc,x_d)\leq\bm0$
\[
P(\bm\eta\leq f)=P\left(\eta_{s_1}\leq x_1,\dotsc,\eta_{s_d}\leq x_d\right)=\exp\left(-E\left(\max_{i=1,\dotsc,d}\abs{x_i}Z_{s_i}\right)\right),
\]
where the right-hand side is the de Haan-Resnick-Pickands representation of a multivariate standard max-stable distribution function (df), cf. \citet{dehar77}, \citet{pick81}. The mapping $\norm{\bm x}_D:=E\left(\max_{i=1,\dotsc,d}\abs{x_i}Z_{s_i}\right)$, $\bm x\in\R^d$, defines a multivariate $D$-norm, see \citet{fahure10} for details.

A stochastic process $\bm\vartheta$ with non-degenerate univariate margins which realizes in $C(S)$ fulfills condition \eqref{eq:max_stable_process} iff there is a continuous process $\bm X$ in the \emph{max-domain of attraction} of $\bm\vartheta$, i.\,e. there are some norming functions $c_n\in C^+(S)$, $d_n\in C(S)$, $n\in\N$, such that
\begin{equation}\label{eq:domain_of_attraction}
\max_{i=1,\dotsc,n}\frac{\bm X^{(i)}-d_n}{c_n}\to_{\mathcal D}\bm\vartheta,
\end{equation}
where $\bm X^{(1)},\bm X^{(2)},\dotsc$ are iid copies of $\bm X$ and $\to_{\mathcal D}$ denotes convergence in distribution, that is, weak convergence of the distributions in $\left(C(S),\norm\cdot_{\infty}\right)$. For details, see e.\,g. \citet[Section 9.2]{dehaf06}. We shortly write $\bm X\in\mathcal D(\bm\vartheta)$ for \eqref{eq:domain_of_attraction}. Relation \eqref{eq:domain_of_attraction} implies in particular that $\bm X$ is in the \emph{functional domain of attraction} of the max-stable process $\bm\vartheta$, that is, for all $f\in E(S)$,
\begin{equation}\label{eq:functional_domain_of_attraction}
P\left(\max_{i=1,\dotsc,n}\frac{\bm X^{(i)}-d_n}{c_n}\leq f\right)=P\left(\frac{\bm X-d_n}{c_n}\leq f\right)^n\to_{n\to\infty}P(\bm\vartheta\leq f).
\end{equation}
For details on the functional domain of attraction, see \citet{aulfaho11} and \citet{aulfahozo14}. Note that in the multivariate context, where random vectors (rv) instead of stochastic processes are considered, \eqref{eq:domain_of_attraction} and \eqref{eq:functional_domain_of_attraction} are equivalent. Recall that a max-stable df on $\R^d$ is always continuous.

\subsection*{Outline and terminology}

In this paper, we deal with different kinds of records of stochastic processes, generally assuming they are in the max-domain of attraction of an MSP and have continuous univariate marginal df. Let $\bm X,\bm X^{(1)},\bm X^{(2)},\dotsc$ be an iid sequence of stochastic processes in $C(S)$. We call $\bm X^{(n)}$ a \emph{simple record}, if $\bm X^{(n)}\not\leq\max_{i=1,\dotsc,n-1}\bm X^{(i)}$, and a \emph{complete record}, if $\bm X^{(n)}>\max_{i=1,\dotsc,n-1}\bm X^{(i)}$. We further define
\begin{align*}
\ubar\pi_n(\bm X)&:=P\left(\bm X^{(n)}\text{ is a simple record }\right),\\
\tbar\pi_n(\bm X)&:=P\left(\bm X^{(n)}\text{ is a complete record }\right).
\end{align*}

By definition, the first observation $\bm X^{(1)}$ is always a record, so we demand $\ubar\pi_1(\bm X)=\bar\pi_1(\bm X)=1$. In the univariate case, where $X,X^{(1)}, X^{(2)},\dotsc$ are simply random variables on the real line, records are much easier to handle, and clearly $\ubar\pi_n(X)=\bar\pi_n(X)=\frac1n$. There are many detailed works on univariate record and record times, see \citet[Sections 6.2 and 6.3]{gal87} and \citet{arnbn98}. Multivariate records have not been discussed that extensively, yet they haven been approached by e.\,g. \citet{golres89}, \citet{golres95} or \citet[Chapter 8]{arnbn98}.

A concept that is closely related to the field of complete records is the so-called \emph{concurrency of extremes}, which is due to \citet{domribsto15}. We say that $\bm X^{(1)},\dotsc,\bm X^{(n)}$ are \emph{sample concurrent}, if
\[
\max_{i=1,\dotsc,n}\bm X^{(i)}=\bm X^{(k)}\text{ for some } k\in\{1,\dotsc,n\}.
\]
In that case, we call $\bm X^{(k)}$ the \emph{champion} among $\bm X^{(1)},\dotsc,\bm X^{(n)}$. We denote the sample concurrence probability by $p_n(\bm X)$ and obtain due to the iid property
\begin{align}
p_n&(\bm X)=P\left(\bigcup_{i=1}^n\left\{\bm X^{(i)}>\max_{1\leq j\neq i\leq n}\bm X^{(j)}\right\}\right)\\\label{eq:complete_record_champion}
&=\sum_{i=1}^nP\left(\bm X^{(i)}>\max_{1\leq j\neq i\leq n}\bm X^{(j)}\right)=nP\left(\bm X^{(n)}>\max_{j=1,\dotsc,n-1}\bm X^{(j)}\right)=n\bar\pi_n(\bm X).\nonumber
\end{align}
Different to records, the concept of multivariate and functional champions is very recent. It has been established in the work of \citet{domribsto15}. In their paper, they derive the limit sample concurrence probability under iid rv $\bm X^{(1)},\dotsc,\bm X^{(n)}$ in $\R^d$. There are also many results on statistical inference in their work.

In Section \ref{sec:champion_probability}, we generalize the limit sample concurrence probability which has been derived in \citet[Theorem 2]{domribsto15} to the case of stochastic processes with continuous sample paths. Further, we compute the distribution of a champion, given that there actually is one. Section \ref{sec:simple_records} deals with simple record times and the distribution of simple records, where all considerations are restricted to the finitedimensional case.

\section{The dual $D$-norm function}\label{sec:dualdnorm}
To begin with, we introduce a mapping which is strongly related to the $D$-norm of an SMSP, and which will be important troughout the whole paper. Let $\norm\cdot_D$ be a $D$-norm generated by $\bm Z=(Z_s)_{s\in S}$ in $\bar C^+(S)$, (recall $E(Z_s)=1$ and $E\left(\sup_{s\in S}Z_s\right)<\infty$). We call the mapping
\[
\dnormf\cdot_D:E(S)\to\R,\qquad f\mapsto\dnormf f_D:=E\left(\inf_{s\in S}\abs{f(s)}Z_s\right),
\]
the \emph{dual $D$-norm function} corresponding to $\norm\cdot_D$. Note that, despite the fact that the generator of $\norm\cdot_D$ is not uniquely determined, the dual $D$-norm function $\dnormf\cdot_D$ does not depend on the choice of the generator of $\norm\cdot_D$. This is a consequence of \citet[Lemma 6]{aulfaho11}. Therefore, the mapping
\[
\norm\cdot_D\to\dnormf\cdot_D
\]
is well-defined, although not one-to-one, since different $D$-norms can lead to the same dual $D$-norm function. One can check that the dual $D$-norm function is always zero if there are at least two independent components $\eta_s,\eta_t$ of the SMSP $\bm\eta$ generated by $\norm\cdot_D$, since Takahashi's theorem (\citet[Theorem 4.4.1]{fahure10}) implies that $\eta_s,\eta_t$ are independent iff $Z_s+Z_t=\max(Z_s,Z_t)$ almost surely, entailing in turn $\min(Z_s,Z_t)=0$ almost surely.

Throughout this paper, the following result on SMSP will be crucial. By a \emph{copula process}, we understand a stochastic process with continuous sample paths such that the univariate margins are uniformly distributed on $(0,1)$. We are interested in copula processes $\bm U$ that are in the max-domain of attraction of an SMSP $\bm\eta$, i.\,e.
\begin{equation}\label{eq:doa_copula_process}
n\left(\max_{i=1,\dotsc,n}\bm{U}^{(i)}-1\right)\to_{\mathcal D}\bm\eta,
\end{equation}
where $\bm U^{(1)},\bm U^{(1)},\dotsc$ are iid copies of $\bm U$.

\begin{prop}\label{prop:doa_copula_process}
Let $\bm U$ be a copula process with $\bm U\in\mathcal D(\bm\eta)$ (i.\,e. \eqref{eq:doa_copula_process} holds), where $\bm\eta=(\eta_s)_{s\in S}$ is an SMSP. Let $\norm\cdot_D$ be the $D$-norm corresponding to $\bm\eta$ and $\bm Z$ be a generator of $\norm\cdot_D$. Then
\begin{equation}\label{eq:fdoa_copula_process_expansion}
n\left(1-P\left(n(\bm U-1)\leq f\right)\right)\to_{n\to\infty}E\left(\sup_{s\in S}\abs{f(s)}Z_s\right)=\norm f_D,\qquad f\in \bar E^-(S),
\end{equation}
and
\begin{equation}\label{eq:survival_function_expansion}
nP\left(n(\bm U-1)>f\right)\to_{n\to\infty}E\left(\inf_{s\in S}\abs{f(s)}Z_s\right)=\dnormf{f}_D,\qquad f\in \bar E^-(S).
\end{equation}
\end{prop}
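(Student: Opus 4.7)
The plan is to establish \eqref{eq:fdoa_copula_process_expansion} as an analytic consequence of the functional max-domain-of-attraction hypothesis, and then to obtain \eqref{eq:survival_function_expansion} in two stages: an inclusion-exclusion argument in finite dimensions, followed by an approximation argument to reach the functional statement.

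For \eqref{eq:fdoa_copula_process_expansion}: the functional max-DoA \eqref{eq:functional_domain_of_attraction} together with the hypothesis $\bm U \in \mathcal{D}(\bm\eta)$ gives $P(n(\bm U-1) \leq f)^n \to \exp(-\norm{f}_D)$ for every $f \in \bar E^-(S)$. Taking the $n$-th root yields $P(n(\bm U-1) \leq f) \to 1$, so the Taylor expansion $\log(1-x) = -x + O(x^2)$ at $x=0$ gives
\begin{align*}
n\bigl(1 - P(n(\bm U-1) \leq f)\bigr) = -n\log P(n(\bm U-1) \leq f) + o(1) \to \norm{f}_D,
\end{align*}
which is \eqref{eq:fdoa_copula_process_expansion}.

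For \eqref{eq:survival_function_expansion} in the finite-dimensional case, I restrict to a finite subset $T = \{s_1, \dotsc, s_d\} \subseteq S$ and write $\bm U_T, f_T$ for the restrictions. Inclusion-exclusion applied to the complementary event, together with the cancellation $\sum_{T' \subseteq T}(-1)^{|T'|} = 0$ of the constant terms, yields
\begin{align*}
nP(\bm U_T > 1+f_T/n) = \sum_{\emptyset \neq T' \subseteq T} (-1)^{|T'|+1} n\bigl(1-P(\bm U_{T'} \leq 1+f_{T'}/n)\bigr).
\end{align*}
Each summand converges by \eqref{eq:fdoa_copula_process_expansion} applied to the subcollection $T'$, and the classical max--min duality $\min_{s \in T} a_s = \sum_{\emptyset \neq T' \subseteq T} (-1)^{|T'|+1} \max_{s \in T'} a_s$ then delivers $\lim_n nP(\bm U_T > 1+f_T/n) = E\bigl(\min_{s \in T} |f(s)| Z_s\bigr)$, which is the finite-dimensional version of \eqref{eq:survival_function_expansion}.

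The passage to the functional setting is the main obstacle. Pick an increasing sequence of finite subsets $T_k \subseteq S$ whose union is dense in $S$ and contains the (finitely many) discontinuities of $f$. The inclusion $\{\bm U > 1+f/n\} \subseteq \{\bm U_{T_k} > 1+f_{T_k}/n\}$ together with the finite-dimensional result immediately gives
\[
\limsup_n nP(\bm U > 1+f/n) \leq E\bigl(\min_{s \in T_k} |f(s)| Z_s\bigr) \downarrow \dnormf{f}_D,
\]
the monotone decrease being justified by continuity of $\bm Z$, the inclusion of discontinuity points of $f$ in $\cup_k T_k$, and dominated convergence (bounded by $\norm{f}_\infty \sup_s Z_s \in L^1$). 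The matching $\liminf$ inequality is the delicate point: sample-path continuity of $\bm U$ yields $P(\bm U > 1+f/n) = \lim_k P(\bm U_{T_k} > 1+f_{T_k}/n)$ for each fixed $n$, but one must justify the interchange of the $k$- and $n$-limits. My plan is to control the residual $P(\bm U_{T_k} > 1+f_{T_k}/n,\ \bm U \not> 1+f/n)$ and show it is $o(1/n)$ as the mesh of $T_k$ shrinks, using the tightness of $n(\max_i \bm U^{(i)}-1)$ in $C(S)$ (inherent to $\bm U \in \mathcal{D}(\bm\eta)$) to produce a uniform modulus-of-continuity bound on $\bm U$ near $1$ which prevents a ``violation'' $U_s \leq 1+f(s)/n$ at some $s \notin T_k$ once $T_k$ is sufficiently fine. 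This gives $\liminf_n nP(\bm U > 1+f/n) \geq E\bigl(\min_{s \in T_k} |f(s)| Z_s\bigr) - o_k(1)$, and letting $k \to \infty$ closes the argument.
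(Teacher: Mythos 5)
Your treatment of \eqref{eq:fdoa_copula_process_expansion} and of the finite-dimensional version of \eqref{eq:survival_function_expansion} is sound and consistent with the paper: the first limit is the standard consequence of $P(n(\bm U-\bm 1)\le f)^n\to\exp(-\norm f_D)$, and the inclusion--exclusion step is exactly the observation recorded in Remark \ref{rem:multivariate_doa}, applied to the functions $f\cdot 1_{T'}\in\bar E^-(S)$. The genuine gap is in the passage to the functional statement, and it sits exactly where you flag it: the $\liminf$ direction. Showing that $nP\bigl(\bm U_{T_k}>\bm 1+f_{T_k}/n,\ \bm U\not>\bm 1+f/n\bigr)$ is small for large $k$ uniformly in $n$ cannot be extracted from the tightness of $\bm M^{(n)}=n\bigl(\max_{i\le n}\bm U^{(i)}-\bm 1\bigr)$ by an elementary argument: tightness controls the oscillation of the pointwise maximum of $n$ copies, not of a \emph{single} copy on the event that it exceeds $\bm 1+f/n$ on a finite grid, since oscillations of one copy near its upper endpoint can be masked in the maximum by the other $n-1$ copies. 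Converting tightness of the maxima into the asymptotic equicontinuity statement you actually need --- something of the form $\lim_{\delta\downarrow 0}\limsup_n nP\bigl(\sup_{d(s,t)<\delta}\abs{U_s-U_t}>\eps/n,\ \sup_{s\in S}U_s>1-M/n\bigr)=0$ --- is precisely the hard direction of the characterization of max-domains of attraction in $C(S)$, i.e.\ the equivalence between \eqref{eq:doa_copula_process} and convergence of the measures $\nu_n(\cdot)=nP\bigl((n(\bm 1-\bm U))^{-1}\in\cdot\bigr)$ to the exponent measure. As written, your plan presupposes rather than proves this key analytic input.

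The paper short-circuits the difficulty by invoking that machinery directly: it passes to polar coordinates on $\bar C^+(S)$, cites \citet[Theorem 9.3.1]{dehaf06} for $\nu_n(A)\to\nu(A)$ on Borel sets $A$ bounded away from $\bm 0$ with $\nu(\partial A)=0$, applies this to $A_{-1/f}=\set{g>-1/f}$ for $f<0$ (the case where $f$ vanishes somewhere is trivial, both sides being $0$), and evaluates $\nu(A_{-1/f})=E\left(\inf_{s\in S}\abs{f(s)}Z_s\right)$ by integrating $r^{-2}\,\d r$ against the angular measure. If you wish to keep your finite-dimensional-approximation route, you must either cite an asymptotic-equicontinuity condition equivalent to \eqref{eq:doa_copula_process} (as in \citet{dehal01}) or prove one; without it, the interchange of the $k$- and $n$-limits remains unjustified.
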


\begin{rem}
\upshape We call a stochastic process $\bm V$ with sample paths in $\bar C^-(S):=\{f\in C(S):~f\leq 0\}$ a \emph{standard generalized Pareto process} (standard GPP), if there is a $D$-norm $\norm\cdot_D$ on $E(S)$ generated by an almost surely bounded generator and some $c>0$ such that
\[
P(\bm V\leq f)=1-\norm f_D
\]
for all $f\in \bar E^-(S)$ with $\norm f_{\infty}\leq c$. It can easily be shown that the survival function of $\bm V$ is given by
\[
P(\bm V>tf)=t\dnormf f_D
\]
for $t>0$ close enough to zero. Hence, condition \eqref{eq:fdoa_copula_process_expansion} and \eqref{eq:survival_function_expansion} mean that the upper tail of the distribution of the copula process $\bm U$ is close to that of the shifted standard GPP $\bm V+1$. For details on GPP, see e.\,g. \citet{buihz08}, \citet{aulfa11}, \citet{aulfa12}, and \citet{ferrdh12}.
\end{rem}

\begin{proof}[Proof of Proposition \ref{prop:doa_copula_process}]
Condition \eqref{eq:doa_copula_process} implies that $\bm U$ is in the functional domain of attraction of $\bm\eta$, see \citet[Proposition 5]{aulfaho11}, i.\,e.
\begin{equation*}
P\left(n(\bm U-1)\leq f\right)^n\to_{n\to\infty}P(\bm\eta\leq f)=\exp\left(-\norm f_D\right),\qquad f\in \bar E^-(S).
\end{equation*}
Now \eqref{eq:fdoa_copula_process_expansion} follows from \citet[Proposition 8]{aulfaho11}. Next we verify \eqref{eq:survival_function_expansion}. Choose a generator $\bm Z$ of $\norm\cdot_D$ with $P(\sup_{s\in S}Z_s=c)=1$ for some $c\geq 1$. Define a measure $\rho$ on the unit sphere $\bar C_1^+(S):=\{g\in \bar C^+(S):~\norm g_{\infty}=1\}$ by
\[
\rho(A):=cP(\bm Z/c\in A),\qquad A\subset \bar C_1^+(S)\text{ Borel},
\]
which is the well-known \emph{angular measure}, see e.\,g. \citet[Section 9.4]{dehaf06}. By transforming to polar coordinates, we identify $\bar C^+(S)$ with the product space $\bar C^+_1(S)\times(0,\infty)$. For the technical details of this transformation, see \citet[Section 9.3]{dehaf06}. On this product space, we define a product measure via $d\nu=d\rho\times dr/r^2$. The measure $\nu$ is well-known as the \emph{exponent measure} in the literature, see again \citet[Section 9.3]{dehaf06} among many others. Now having in mind that $\bm U<1$ a.\,s. (\citet[Corollary 3.15]{hofm13}) and $\bm\eta<0$ a.\,s. (\citet[Lemma 1]{aulfaho11}), it is easy to see that \eqref{eq:doa_copula_process} is equivalent with
\[
\frac1n\max_{i=1,\dotsc,n}\frac1{1-\bm U^{(i)}}\to_{\mathcal D}-\frac1{\bm\eta},
\]
where $-1/{\bm\eta}$ is a \emph{simple} MSP. Therefore, we have
\[
\nu_n(A):=nP\left((n(1-\bm U))^{-1}\in A\right)\to_{n\to\infty}\nu(A)
\]
for all Borel sets $A\subset\bar C^+(S)$ with $\nu(\partial A)=0$ and $\inf\{\norm f_{\infty}:~f\in A\}>0$, see \citet[Theorem 9.3.1]{dehaf06}. Define for $h\in E(S)$ the set $A_h:=\{g\in C(S):g>h\}$. Now, for all $f\in\bar E^-(S)$ with $f<0$,
\begin{align*}
nP(n(\bm U-1)>f)&=\nu_n\left(A_{-1/f}\right)\\
&\to_{n\to\infty}\nu(A_{-1/f})\\
&=\nu\left(\left\{(g,r)\in\bar C^+_1(S)\times(0,\infty):~rg>1/\abs f \right\}\right)\\
&=\int_{\bar C^+_1(S)}\int_{\left(\inf_{s\in S}\abs{f(s)}g(s)\right)^{-1}}^{\infty}r^{-2}~\d r~\rho(\d g)\\
&=E\left(\inf_{s\in S}\abs{f(s)}Z_s\right).
\end{align*}

\end{proof}

\begin{rem}\label{rem:multivariate_doa}\upshape
Clearly, the dual $D$-norm function can also be defined for multivariate $D$-norms. Given a multivariate $D$-norm $\norm{\bm x}_D=E\left(\max_{j=1,\dotsc,d}\abs{x_j}Z_j\right)$, $\bm x\in\R^d$, we write
\[
\dnormf{\bm x}_D:=E\left(\min_{j=1,\dotsc,d}\abs{x_j}Z_j\right),\qquad \bm x\in\R^d.
\]
A simple connection between the functions $\norm\cdot_D$ and $\dnormf\cdot_D$ is now given by the general equation
\begin{equation}\label{eq:max=min}
\min(a_1,\dots,a_d)= \sum_{\emptyset\not=T\subset\set{1,\dots,d}}(-1)^{\abs T-1} \max\set{a_j,\,j\in T},
\end{equation}
which is true for arbitrary numbers $a_1,\dotsc,a_d\in\R$. Applying the inclusion-exclusion principle and including \eqref{eq:max=min}, the multivariate version of \eqref{eq:survival_function_expansion} directly follows from that of \eqref{eq:fdoa_copula_process_expansion}. While \eqref{eq:fdoa_copula_process_expansion} traces back to \citet{deheu84} and \citet{gal87}, the multivariate dual $D$-norm function was established by \citet{schmst06}, see also \citet{dehanepe08}. In their work, ${\displaystyle\dnormf\cdot_D}$ is called \emph{tail copula}. However, they do not provide an explicite formula for the tail copula.
\end{rem}

\begin{exam}[Independence and perfect dependence]\upshape
We have that
\[
\dnormf\cdot_1=0
\]
is the least dual $D$-norm function, corresponding to the case of independent univariate margins, where $\norm\cdot_D=\norm\cdot_1$, and
\[
\dnormf{\bfx}_\infty=\min_{1\le j\le d}\abs{x_j},\qquad \bfx\in\R^d,
\]
is the largest dual $D$-norm function, corresponding to the perfect dependence case, where $\norm\cdot_D=\norm\cdot_\infty$. Hence, we have for an arbitrary dual $D$-norm function the bounds
\[
0=\dnormf\cdot_1\le \dnormf\cdot_D\le \dnormf\cdot_\infty.
\]
\end{exam}

For the next examples, the following abbreviation is useful. We define for $\bm x\in\R^d$ and a nonempty subset $T\subset\{1,\dotsc,d\}$
\[
\bm x_T:=(x_i,~i\in T)\in\R^{\abs T}.
\]

\begin{exam}[Fr\' echet model]\label{exam:frechet}\upshape
It is well-known that a $D$-norm is given by the $l_\lambda$-norm
\begin{equation}\label{eq:lp_norm}
\norm{\bm x}_\lambda:=\left(\sum_{i=1}^d\abs{x_i}^\lambda\right)^{1/\lambda},\qquad \bm x\in\R^d,~\lambda\in(1,\infty),
\end{equation}
usually referred to as the \emph{logistic model} in the literature. Therefore, we obtain by \eqref{eq:max=min}
\[
\dnormf{\bm x}_\lambda=\sum_{\emptyset\neq T\subset\{1,\dotsc,d\}}(-1)^{\abs T-1}\norm{\bm x_T}_\lambda,\qquad \bm x\in\R^d,~\lambda\in(1,\infty).
\]
A generator $\bm Z=(Z_1,\dotsc,Z_d)$ of $\norm\cdot_\lambda$ can easily be found: Put $Z_i:=\tilde Z_i/\Gamma\left(1-1/\lambda\right)$, $i=1,\dotsc,d$, where $\tilde Z_1,\dotsc,\tilde Z_d$ are iid Fr\'{e}chet distributed with parameter $\lambda$, and $\Gamma$ denotes the gamma function.
\end{exam}

\begin{exam}[Weibull model]\label{exam:weibull}\upshape
We can define a generator $\bm Z=(Z_1,\dotsc,Z_d)$ by taking independent Weibull distributed random variables $\tilde Z_1,\dotsc,\tilde Z_d$, i.\,e. $P(\tilde Z_1>t)=\exp(-t^\alpha)$, $t>0$, $\alpha>0$, and putting $Z_i:=\tilde Z_i/\Gamma(1+1/\alpha)$. It is easy to show that the corresponding dual $D$-norm function is for $\bm x\in\R^d$, $x_i\neq0$, $i=1,\dotsc,d$, given by
\begin{equation}\label{eq:dualdnorm_weibull}
\dnormf{\bm x}_{W_\alpha}=\left(\norm{1/\bm x}_\alpha\right)^{-1},\qquad \alpha>0.
\end{equation}
Hence, by \eqref{eq:max=min}, the attendant $D$-norm is for such $\bm x$
\[
\norm{\bm x}_{W_\alpha}=\sum_{\emptyset\neq T\subset\{1,\dotsc,d\}}(-1)^{\abs T-1}\left(\norm{1/\bm x_T}_\alpha\right)^{-1},\qquad\alpha>0.
\]
Note that $\norm\cdot_\alpha$ is defined as in \eqref{eq:lp_norm}, even though it does not define a norm in the case $\alpha<1$.
\end{exam}

\begin{exam}[Bernoulli model]\label{exam:bernoulli}\upshape
A simple example of a discrete generator is induced by independent Bernoulli-$\beta$ random variables $\tilde Z_i$, $i=1,\dotsc,d$, $\beta\in(0,1]$, and putting $Z_i:=\tilde Z_i/\beta$, $i=1,\dotsc,d$. The $D$-norm and the dual $D$-norm function are easily derived. We have
\[
\norm{\bm x}_{B_\beta}=\sum_{\emptyset\neq T\subset\{1,\dotsc,d\}}\beta^{\abs T-1}(1-\beta)^{d-\abs T}\norm{\bm x_T}_\infty,\qquad\bm x\in\R^d,~\beta\in(0,1].
\]
Note that $\norm\cdot_{B_1}=\norm\cdot_\infty$ and $\norm\cdot_{B_\beta}\to\norm\cdot_1$ as $\beta\to0$. Analogously,
\[
\dnormf{\bm x}_{B_\beta}=\sum_{\emptyset\neq T\subset\{1,\dotsc,d\}}\beta^{\abs T-1}(1-\beta)^{d-\abs T}\dnormf{\bm x_T}_\infty,\qquad\bm x\in\R^d,~\beta\in(0,1].
\]
\end{exam}

\section{The functional extremal concurrence probability}\label{sec:champion_probability}

The aim of this section is to investigate the limit behaviour of the sample concurrence probability. In \citet{domribsto15}, it is shown that the sample concurrence probability $p_n(\bm X)$ of a rv $\bm X$ converges, provided that $\bm X$ has continuous margins and lies in the max-domain of attraction of a max-stable rv. We generalize this assertion to the functional setup, having in mind that we can formulate every result in the multivariate context analogously.

\begin{theorem}\label{thm:champions_copula_processes}
Let $\bfU^{(1)},\bfU^{(2)},\ldots$ be independent copies of a copula process $\bfU$, satisfying $\bm U\in\mathcal D(\bm\eta)$, where $\bm\eta$ is an SMSP with corresponding $D$-norm $\norm\cdot_D$. Then
\begin{equation*}
p_n(\bm U)=n\bar\pi_n(\bm U)\to_{n\to\infty} E\left(\dnormf{\bm\eta}_D\right),
\end{equation*}
where $\dnormf\cdot_D$ is the dual $D$-norm function corresponding to $\norm\cdot_D$.
\end{theorem}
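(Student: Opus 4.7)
The plan is to condition on the maximum $\bm{M}_{n-1} := \max_{1 \le i < n} \bm{U}^{(i)}$ of the first $n-1$ copies, which is independent of $\bm{U}^{(n)}$, and then combine Proposition~\ref{prop:doa_copula_process} with the domain-of-attraction convergence. Set $\tilde{\bm{M}}_n := n(\bm{M}_{n-1} - 1) \in \bar C^-(S)$ and
$$h_n(\bm{f}) := n P(n(\bm{U} - 1) > \bm{f}), \qquad \bm{f} \in \bar E^-(S).$$
Conditioning on $\bm{M}_{n-1}$ yields
$$p_n(\bm{U}) = n P(\bm{U}^{(n)} > \bm{M}_{n-1}) = E\!\left[h_n(\tilde{\bm{M}}_n)\right].$$
The two relevant inputs are: (a) the domain-of-attraction hypothesis gives $\tilde{\bm{M}}_n \to_{\mathcal D} \bm{\eta}$ in $(C(S), \norm\cdot_\infty)$; and (b) Proposition~\ref{prop:doa_copula_process}, namely \eqref{eq:survival_function_expansion}, gives $h_n(\bm{f}) \to \dnormf{\bm{f}}_D$ for every fixed $\bm{f} \in \bar E^-(S)$.

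To combine (a) and (b) I would apply Skorokhod's representation theorem to realize $\tilde{\bm{M}}_n \to \bm{\eta}$ almost surely in uniform norm. Two further ingredients are then needed. First, $\bm{f} \mapsto \dnormf{\bm{f}}_D$ is continuous on $(\bar C^-(S), \norm\cdot_\infty)$: whenever $\bm{f}_k \to \bm{f}$ uniformly, $\inf_s \abs{f_k(s)} Z_s \to \inf_s \abs{f(s)} Z_s$ pointwise in $\omega$, and the bound $\sup_s Z_s \le c$ a.s.\ permits dominated convergence. Second, $h_n$ is monotone decreasing in $\bm{f}$. Because $\bm{\eta} < 0$ a.s.\ and continuous on the compact set $S$, one has $\sup_s \eta_s < 0$ a.s., so for almost every $\omega$ and every sufficiently small $\epsilon > 0$ the functions $\bm{\eta}(\omega) \pm \epsilon \bm{1}$ lie in $\bar C^-(S)$, and eventually $\bm{\eta} - \epsilon \bm{1} \le \tilde{\bm{M}}_n \le \bm{\eta} + \epsilon \bm{1}$. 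Monotonicity of $h_n$ together with Proposition~\ref{prop:doa_copula_process} applied pointwise in $\omega$ to the deterministic bounds $\bm{\eta}(\omega) \pm \epsilon \bm{1}$ then yield
\begin{align*}
\dnormf{\bm{\eta} + \epsilon \bm{1}}_D \le \liminf_{n\to\infty} h_n(\tilde{\bm{M}}_n) \le \limsup_{n\to\infty} h_n(\tilde{\bm{M}}_n) \le \dnormf{\bm{\eta} - \epsilon \bm{1}}_D \qquad \text{a.s.}
\end{align*}
Letting $\epsilon \downarrow 0$ and invoking continuity of $\dnormf{\cdot}_D$ delivers the almost-sure limit $h_n(\tilde{\bm{M}}_n) \to \dnormf{\bm{\eta}}_D$.

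To promote this to convergence of expectations, I would exploit the marginal bound: for any fixed $s_0 \in S$,
$$h_n(\bm{f}) \le n P(U_{s_0} > 1 + f(s_0)/n) = \abs{f(s_0)},$$
so $h_n(\tilde{\bm{M}}_n) \le n(1 - M_{n-1, s_0}) = n \min_{1 \le i < n}(1 - U^{(i)}_{s_0})$. The right-hand side is $n$ times the minimum of $n-1$ iid uniform$(0,1)$ variables, which converges in distribution to a unit exponential with convergence of all moments, and in particular forms a uniformly integrable family. A generalised dominated convergence argument (via Fatou's lemma applied to $Y_n \pm X_n$) then gives $p_n(\bm{U}) = E[h_n(\tilde{\bm{M}}_n)] \to E[\dnormf{\bm{\eta}}_D]$.

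The main obstacle is the joint-limit step: both $h_n$ and its argument $\tilde{\bm{M}}_n$ depend on $n$, so Proposition~\ref{prop:doa_copula_process} cannot be invoked directly. The sandwich argument exploits the monotonicity of $h_n$ together with the continuity of the limit $\dnormf{\cdot}_D$ to bypass any need for a local-uniform strengthening of Proposition~\ref{prop:doa_copula_process}.
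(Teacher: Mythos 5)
Your proposal is correct and follows essentially the same route as the paper's proof: conditioning on the running maximum $\bm M^{(n)}:=n\max_{i=1,\dotsc,n-1}(\bm U^{(i)}-\bm 1)$, combining the survival-function expansion \eqref{eq:survival_function_expansion} with $\bm M^{(n)}\to_{\mathcal D}\bm\eta$, and upgrading to convergence of expectations via the identical marginal bound $h_n(f)\le\abs{f(s_0)}$ and the resulting second-moment estimate $E\bigl(h_n(\tilde{\bm M}_n)^2\bigr)\le 2n/(n+1)\le 2$. The only difference is in presentation: where the paper invokes the extended continuous mapping theorem after asserting $G_n(f_n)\to\dnormf{f}_D$ for $\norm{f_n-f}_\infty\to 0$ by ``standard arguments'', you verify exactly this via Skorokhod representation together with the monotonicity sandwich and the continuity of $\dnormf{\cdot}_D$, i.e.\ you supply the detail the paper leaves implicit.
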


We call $E\left(\dnormf{\bm\eta}_D\right)$ the \emph{extremal concurrence probability} corresponding to $\norm\cdot_D$, in accordance with the terminology in \citet{domribsto15}. As they have shown, the extremal concurrence probability has the following interpretation. It is well-known (cf. \citet[Corollary 9.4.2]{dehaf06}), that the simple max-stable process $\bm\xi=-1/\bm\eta$ has the representation
\begin{equation}\label{eq:ppp_representation}
\bm\xi=_{\mathcal D}\sup_{k\in\N}\bm\vartheta_k,
\end{equation}
where $(\bm\vartheta_k)_{k\in\N}$ are the points of a Poisson point process on $(0,\infty)\times\bar C_1^+(S)$ with a certain intensity measure. The extremal concurrence probability is now precisely the probability that only one function $\bm\vartheta_k$ contributes to the supremum in \eqref{eq:ppp_representation}, see \citet[Theorem 1]{domribsto15}.

Note that one has to distinguish between $E\left(\dnormf{\bm\eta}_D\right)$ and $E\left(\inf_{s\in S}\abs{\eta_s}Z_s\right)$ in general. However, if $\bm\eta$ and $\bm Z$ are independent, both terms coincide, cf. Lemma \ref{lemma:other_formula_for_champion_probability}.

\begin{proof}[Proof of Theorem \ref{thm:champions_copula_processes}]
Denote by $P*\xi$ the distribution of a random variable $\xi$. Let $\bm\eta$ be an SMSP with $D$-norm $\norm\cdot_D$ and put $\bm M^{(n)}:=n\max_{i=1,\dotsc,n-1}\left(\bm U^{(i)}-1\right)\to_{\mathcal D}\bm\eta$ due to \eqref{eq:doa_copula_process}. Conditioning on $\bm M^{(n)}=f$ yields
\begin{align*}
n\bar\pi_n(\bm U)&=\int_{\bar C^-(S)}nP\left(n(\bm U-1)>f\right)~\left(P*\bm M^{(n)}\right)(\d f)\\
&=:\int_{\bar C^-(S)}G_n(f)~\left(P*\bm M^{(n)}\right)(\d f)
\end{align*}
since $\bm M^{(n)}$ and $\bm U$ are independent. Setting $X_n:=G_n\circ\bm M^{(n)}$, we need to show
\[
n\bar\pi_n(\bm U)=E(X_n)\to_{n\to\infty}=E\left(\dnormf{\bm\eta}_D\right).\\
\]
It is enough to verify (\citet[p. 32]{billi68}):
\begin{enumerate}[(i)]
\item $X_n\to_{\mathcal D} \dnormf{\bm\eta}_D$.
\item There is $\eps>0$ with $\sup_{n\in\N}E\left(\abs{X_n}^{1+\eps}\right)<\infty$.
\end{enumerate}
Note that (ii) implies the \emph{uniform integrability} of the sequence $(X_n)_{n\in\N}$.

We first show (i). Obviously, $G_n(f)\to \dnormf f_D$ due to \eqref{eq:survival_function_expansion}. Standard arguments such as the monotone convergence theorem yield $G_n(f_n)\to \dnormf f_D$ if $f_n,f \in \bar C^-(S)$ with $\norm{f_n-f}_{\infty}\to 0$. Now noticing that $\bm M^{(n)}\to_{\mathcal D}\bm\eta$, the assertion is immediate from the extended continuous mapping theorem, see cf. \citet[Theorem 5.5]{billi68}.

Now we proof (ii). Elementary calculations show that for all $n\geq 2$
\begin{align*}
E\left(X_n^2\right)&=\int_{\bar C^-(S)}n^2P\left(n(\bm U-1)>f\right)^2~\left(P*\bm M^{(n)}\right)(\d f)\\
&\leq \int_{\bar C^-(S)}n^2P\left(n\left(U_s-1\right)>f(s)\right)^2~\left(P*\bm M^{(n)}\right)(\d f)\\
&=E\left(\left(M_s^{(n)}\right)^2\right)=\frac{2n}{n+1}\leq 2.
\end{align*}
\end{proof}

\begin{cor}\label{cor:number_of_complete_records}
Denote by $M(n):=\sum_{i=1}^n 1_{\left\{\bfX^{(i)}>\max_{1\le j<i}\bfX^{(j)}\right\}}$ the number of complete records among $\bfX^{(1)},\ldots,\bfX^{(n)}$. Then
\[
\frac{E(M(n))}{\log(n)}\to_{n\to\infty} E\left(\dnormf{\bm\eta}_D\right).
\]
\end{cor}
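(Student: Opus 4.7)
The plan is to combine the asymptotics $n\bar\pi_n(\bm X)\to E(\dnormf{\bm\eta}_D)$ from Theorem \ref{thm:champions_copula_processes} with a Ces\`aro-type averaging argument for the harmonic weights $1/i$.

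First I would use linearity of expectation to write
\[
E(M(n))=\sum_{i=1}^n P\left(\bm X^{(i)}>\max_{1\le j<i}\bm X^{(j)}\right)=\sum_{i=1}^n\bar\pi_i(\bm X).
\]
Since $\bm X$ has continuous univariate margins, the event that $\bm X^{(i)}$ is a complete record depends only on the underlying copula process $\bm U=(F_s(X_s))_{s\in S}$, so $\bar\pi_i(\bm X)=\bar\pi_i(\bm U)$. Because $\bm X\in\mathcal D(\bm\vartheta)$ implies that $\bm U$ lies in the domain of attraction of the SMSP $\bm\eta$ associated with the same $D$-norm, Theorem \ref{thm:champions_copula_processes} yields
\[
a_i:=i\bar\pi_i(\bm X)=i\bar\pi_i(\bm U)\to_{i\to\infty} c,\qquad c:=E\left(\dnormf{\bm\eta}_D\right).
\]

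The main (and only nontrivial) step is the weighted Ces\`aro lemma: if $a_i\to c$, then
\[
\frac{1}{\log n}\sum_{i=1}^n\frac{a_i}{i}\to c.
\]
This is standard: given $\eps>0$, pick $N$ with $\abs{a_i-c}<\eps$ for $i>N$, split the sum at $N$, and use the fact that the finite head $\sum_{i=1}^N a_i/i$ is bounded while $\sum_{i=N+1}^n 1/i=\log n+O(1)$. The two-sided bound $(c-\eps)\sum_{i=N+1}^n 1/i\le \sum_{i=N+1}^n a_i/i\le(c+\eps)\sum_{i=N+1}^n 1/i$ together with $\eps\downarrow 0$ delivers the limit.

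Applying this lemma to $a_i=i\bar\pi_i(\bm X)$, and observing that $\sum_{i=1}^n\bar\pi_i(\bm X)=\sum_{i=1}^n a_i/i$, gives
\[
\frac{E(M(n))}{\log n}=\frac{1}{\log n}\sum_{i=1}^n\frac{a_i}{i}\to c=E\left(\dnormf{\bm\eta}_D\right),
\]
which is the claim. There is no real obstacle here beyond Theorem \ref{thm:champions_copula_processes}; the argument is a clean consequence of it together with the harmonic Ces\`aro averaging.
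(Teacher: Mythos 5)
Your proposal is correct and follows essentially the same route as the paper: the authors likewise reduce the claim to Theorem \ref{thm:champions_copula_processes} combined with the harmonic Ces\`aro fact that $\bigl(\sum_{i=1}^n a_i/i\bigr)/\log(n)\to a$ whenever $a_i\to a$. You merely spell out the details (the decomposition $E(M(n))=\sum_i\bar\pi_i$, the reduction to the copula process, and the $\eps$-splitting proof of the averaging lemma) that the paper leaves implicit.
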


\begin{proof}
The assertion follows from Theorem \ref{thm:champions_copula_processes} and the fact that  $\left(\sum_{i=1}^n\frac{a_i}{i}\right)/\log(n)\linebreak\to_{n\to\infty}a$, if $(a_n)_{n\in\N}$ is some real-valued sequence with $a_n\to_{n\to\infty}a$.
\end{proof}

The following lemma provides an alternative representation for the extremal concurrence probability. Denote by $1_A$ the indicator function of some set $A$, i.\,e. $1_A(\omega)=1$, if $\omega\in A$, and $1_A(\omega)=0$, else.

\begin{lemma}\label{lemma:other_formula_for_champion_probability}
Let $\bm\eta=(\eta_s)_{s\in S}$ be an SMSP in $\bC$ with $D$-norm $\norm\cdot_D$ and generator $\bm Z=(Z_s)_{s\in S}$, and $f\in\bE$. Then
\begin{enumerate}[(i)]
\item
\begin{equation*}
E\left(\dnormf{\bm\eta}_D\right)=E\left(\norm{1/\bm Z}_D^{-1}1_{\{\bm Z>0\}}\right).
\end{equation*}
\item
\begin{align*}
&E\left(\dnormf{\max(\bm\eta,f)}_D\right)=\\
&=E\left(\left(\norm{1/\bm Z}_D\right)^{-1}\left(1-\exp\left(\norm{1/\bm Z}_D\sup_{s\in S}(f(s)Z_s)\right)\right)1_{\{\bm Z>0\}}\right).
\end{align*}
\end{enumerate}
\end{lemma}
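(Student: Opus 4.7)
The strategy for both parts is to realise $\dnormf{\cdot}_D$ as a conditional expectation against an auxiliary generator. Since $\dnormf{\cdot}_D$ does not depend on the choice of generator, we may enlarge the probability space and choose a generator $\bm Z$ of $\norm{\cdot}_D$ that is independent of $\bm\eta$. Then, by the definition of $\dnormf{\cdot}_D$ together with Fubini,
\[
E\bigl(\dnormf{g}_D\bigr)=E\Bigl(\inf_{s\in S}\abs{g(s)}Z_s\Bigr)
\]
whenever $g$ is an $\bm\eta$-measurable random function in $\bar E^-(S)$, the right-hand side being the joint expectation over the independent pair $(\bm\eta,\bm Z)$.

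For part (i), apply this with $g=\bm\eta$ and condition on $\bm Z$. The layer-cake identity gives
\[
E\Bigl(\inf_{s\in S}\abs{\eta_s}Z_s\,\Big|\,\bm Z\Bigr)=\int_0^\infty P\Bigl(\inf_{s\in S}\abs{\eta_s}Z_s\geq t\,\Big|\,\bm Z\Bigr)\d t.
\]
On the event $\{\bm Z>0\}$ the inner event rewrites as $\{\bm\eta\leq -t/\bm Z\}$, which, by the SMSP functional, has conditional probability $\exp(-t\norm{1/\bm Z}_D)$; the $t$-integral therefore evaluates to $\norm{1/\bm Z}_D^{-1}$. On the complement, $\bm Z$ vanishes somewhere in $S$, the infimum collapses to $0$, and $\norm{1/\bm Z}_D^{-1}$ is consistently interpreted as $0$, so the indicator $1_{\{\bm Z>0\}}$ covers both cases. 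Taking expectation over $\bm Z$ concludes part (i).

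For part (ii), the pointwise identity $\abs{\max(\eta_s,f(s))}=\min(\abs{\eta_s},\abs{f(s)})$, valid because $\bm\eta\leq 0$ and $f\leq 0$, converts the claim to computing $E\bigl(\inf_{s\in S}\min(\abs{\eta_s},\abs{f(s)})Z_s\bigr)$. Conditioning on $\bm Z>0$, the event $\{\min(\abs{\eta_s},\abs{f(s)})Z_s\geq t\text{ for all }s\}$ factors as the intersection of the deterministic cutoff $\{t\leq -\sup_{s\in S}f(s)Z_s\}$ with the $\bm\eta$-event $\{\bm\eta\leq -t/\bm Z\}$ of probability $\exp(-t\norm{1/\bm Z}_D)$. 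Integrating $t$ over $(0,-\sup_{s\in S}f(s)Z_s)$ and evaluating the elementary exponential integral produces
\[
\norm{1/\bm Z}_D^{-1}\Bigl(1-\exp\bigl(\norm{1/\bm Z}_D\sup_{s\in S}f(s)Z_s\bigr)\Bigr),
\]
and a final expectation over $\bm Z$ yields the stated formula.

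I expect the main obstacle to be the bookkeeping on the set $\{\bm Z\not>0\}$: strictly speaking, $1/\bm Z$ leaves $E(S)$ as soon as $\bm Z$ has a zero in $S$, and one has to verify that the integrand, the factor $\norm{1/\bm Z}_D^{-1}$, and the conditional tail probability all degenerate compatibly, so that the indicator $1_{\{\bm Z>0\}}$ can be inserted without loss and $\norm{1/\bm Z}_D^{-1}$ can safely be read as $0$ there. Once this is settled, the argument is a routine combination of Fubini, the layer-cake representation for non-negative random variables, and the defining relation $P(\bm\eta\leq h)=\exp(-\norm{h}_D)$ of the SMSP.
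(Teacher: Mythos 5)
Your proposal is correct and follows essentially the same route as the paper: choose a generator $\bm Z$ independent of $\bm\eta$, apply Fubini/conditioning together with the layer-cake formula, and evaluate the resulting tail probability via $P(\bm\eta\leq h)=\exp(-\norm h_D)$. You in fact supply more detail than the paper does, which assumes $P(\bm Z>0)=1$ ``for ease of notation'' and dismisses part (ii) as ``similar arguments,'' whereas you handle the degenerate set $\{\bm Z\not>0\}$ and carry out the truncated exponential integral for (ii) explicitly.
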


\begin{proof}
Without loss of generality, choose a generator $\bm Z$ of $\norm\cdot_D$ which is independent of $\bm\eta$. Then
\[
E\left(\inf_{s\in S}\abs{\eta_s}Z_s\right)=\int_{\bar C^-(S)}{\displaystyle\dnormf f_D}~(P*\bm\eta)(df)=E\left(\dnormf{\bm\eta}_D\right).
\]
Suppose $P(\bm Z>0)=1$ for ease of notation. Fubini's theorem and the fact that $\bm\eta$ and $\bm Z$ are independent, entail
\begin{align*}
E\left(\inf_{s\in S}\left(\abs{\eta_s}Z_s\right)\right)&=\int_0^{\infty}P\left(\inf_{s\in S}\left(\abs{\eta_s}Z_s\right)>t\right)~\d t\\
&=\int_0^{\infty}P\left(\eta_s<-t/Z_s,~s\in S\right)~\d t\\
&=E\left(\int_0^{\infty}\exp\left(-t\norm{1/\bm Z}_D\right)~\d t\right)\\
&=E\left(\left(\norm{1/\bm Z}_D\right)^{-1}\int_0^{\infty}\exp\left(-t\right)~\d t\right),
\end{align*}
which is (i). Assertion (ii) can be shown by similar arguments.
\end{proof}

\begin{exam}[Independence and perfect dependence]\upshape
A generator of the special $D$-norm $\norm\cdot_D=\norm\cdot_\infty$, which characterizes the complete dependence of the univariate margins of $\bm\eta$, is obviously given by the constant $\bm Z\equiv1$. In that case, Theorem \ref{thm:champions_copula_processes} shows that the extremal concurrence probability is one, i.\,e. $p_n(\bm U)=n\bar\pi_n(\bm U)\to_{n\to\infty}1$. This is not at all surprising: in the univariate context, where $X^{(1)},\dotsc,X^{(n)}$ are random variables on the real line, there clearly exists a champion with probability one - it is the maximum of $X^{(1)},\dotsc,X^{(n)}$.

In contrast to that, we have
\begin{equation}\label{eq:concurrence_probability_zero}
E\left(\left(\norm{1/\bm Z}_D\right)^{-1}1_{\{\bm Z>0\}}\right)=0\iff\inf_{s\in S}Z_s=0\text{ a.\,s.}
\end{equation}
In particular, this is the case when at least two components $\eta_s$, $\eta_t$, $s\neq t$, are independent, see the argument in Section \ref{sec:dualdnorm}.
\end{exam}

\begin{exam}[Bernoulli model]\upshape
Consider a standard max-stable rv $\bm\eta\in\R^d$ with corresponding $D$-norm $\norm\cdot_{B_\beta}$, $\beta\in(0,1]$, known from Example \ref{exam:bernoulli}. It is easy to see that
\[
\norm{\bm 1}_{B_\beta}=\frac{1-(1-\beta)^d}{\beta}.
\]
From the general equality
\[
E\left(\dnormf{\tilde{\bm\eta}}_\infty\right)=\frac1{\norm{\bm 1}_D},
\]
where $\tilde{\bm\eta}$ is some standard max-stable rv with $D$-norm $\norm\cdot_D$, we conclude
\begin{align*}
E\left(\dnormf{\bm\eta}_{B_\beta}\right)&=\sum_{\emptyset\neq T\subset\{1,\dotsc,d\}}\beta^{\abs T-1}(1-\beta)^{d-\abs T}E\left(\dnormf{\bm\eta_T}_\infty\right)\\
&=\sum_{k=1}^d\binom dk\beta^k\frac{(1-\beta)^{d-k}}{1-(1-\beta)^k}
\end{align*}
\end{exam}

For another example, namely the logistic model, we refer to Example \ref{exam:frechet_records}.

\begin{rem}\label{rem:arbitrary_margins}
\upshape
\begin{enumerate}[(i)]
\item Theorem \ref{thm:champions_copula_processes} implies that the extremal concurrence probability, just like the dual $D$-norm function, does not depend on the choice of $\bm Z$, but only on $\norm\cdot_D$.
\item In the preceding theorem, we can replace $\bm U,\bm U^{(1)},\bm U^{(2)},\dotsc$ by a sequence of iid stochastic processes $\bm X,\bm X^{(1)},\bm X^{(2)},\dotsc$ whose univariate marginal df $F_s(x)=P(X_s\leq x)$, $s\in S$, are continuous and strictly monontonically increasing on their support. The conditions \eqref{eq:fdoa_copula_process_expansion} and \eqref{eq:survival_function_expansion} will then have to apply to the copula process $(F_s(X_s))_{s\in S}$. In that case,
\[
nP\left(\bm X>\max_{i=1,\dotsc,n-1}\bm X^{(i)}\right)\to_{n\to\infty} E\left(\left(\norm{1/\bm Z}_D\right)^{-1}1_{\{\bm Z>0\}}\right),
\]
where $\bm Z$ is a generator of the $D$-norm corresponding to the copula expansion of $(F_s(X_s))_{s\in S}$. Hence, the probability that there is a champion among $\bfX^{(1)},\ldots,\bfX^{(n)}$ does not depend on the univariate margins, but rather on the copula process of $\bm X$.

\end{enumerate}
\end{rem}

The above remark shows that we do not have to limit our considerations to copula processes. If, for instance, $\bm X$ is an MSP itself with univariate marginal distributions $G_s$, $s\in S$, then $\bm\eta:=(\log\left(G_s(X_s)\right))_{s\in S}$ is an SMSP. Applying the max-stability of $\bm\eta$, we obtain
\begin{align*}
\bar\pi_n(\bm X)=\bar\pi_n(\bm\eta)&=P\left(\bm \eta>\max_{i=1,\dotsc,n-1}\bm \eta^{(i)}\right)\\
&=P\left((n-1)\bm\eta>\bm\eta^{(1)}\right)\\
&=\int_{\bC}P\left((n-1) f>\bm\eta^{(1)}\right)~(P\ast \bm\eta)(\d f)\\
&=\int_{\bC}\exp\left(-(n-1)\norm f_D\right)~(P\ast \bm\eta)(\d f)\\
&=E\left(\exp\left(-(n-1)\norm{\bm\eta}_D\right)\right),
\end{align*}
where $\norm\cdot_D$ is the $D$-norm corresponding to $\bm\eta$, and $\bm\eta^{(1)},\bm\eta^{(2)},\dotsc$ are iid copies of $\bm\eta$.

Having established the functional extremal concurrence probability, we can now derive the limit survival function of a complete record. We will have to restrict to the case where $P(\bm Z>0)>0$, which is equivalent to the fact that the extremal concurrence probability is positive, cf. \eqref{eq:concurrence_probability_zero}.

Just like before, we consider the copula process case first.

\begin{prop}\label{prop:survival_function_champion}
In addition to the assumptions of Theorem \ref{thm:champions_copula_processes}, suppose that the generator fulfills $P(\bm Z>0)>0$. Then, for $f\in\bE$,
\begin{align*}
&P\left(n\left(\bm U^{(n)}-\bm 1\right)>f\Big|\bm U^{(n)}\textnormal{ is a complete record} \right)\\
&\hspace*{2cm}=:\bar H_n(f)\to_{n\to\infty} \bar H_D(f):=\frac{\displaystyle E\left(\dnormf{\max(\bm\eta,f)}_D\right)}{E\left(\dnormf{\bm\eta}_D\right)},
\end{align*}
where $\bfeta=(\eta_s)_{s\in S}$ is an SMSP with corresponding $D$-norm $\norm\cdot_D$.
\end{prop}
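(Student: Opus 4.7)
The plan is to adapt the proof of Theorem~\ref{thm:champions_copula_processes}, analysing numerator and denominator of the conditional probability separately.

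\textbf{Step 1: Reformulation.} Write $\bar H_n(f) = N_n(f)/\bar\pi_n(\bm U)$ where
$$N_n(f) := P\!\left(n(\bm U^{(n)}-\bm 1) > f,\ \bm U^{(n)} > \max_{i=1,\dotsc,n-1}\bm U^{(i)}\right).$$
Setting $\bm M^{(n)} := n\max_{i=1,\dotsc,n-1}(\bm U^{(i)}-\bm 1)$, the two events combine into the single event $n(\bm U^{(n)}-\bm 1) > \max(\bm M^{(n)}, f)$. By the independence of $\bm U^{(n)}$ and $\bm M^{(n)}$, conditioning on $\bm M^{(n)}=g$ gives
$$n N_n(f) = \int_{\bC} G_n(g,f)\,(P\ast \bm M^{(n)})(\d g),\qquad G_n(g,f) := nP\!\left(n(\bm U-\bm 1) > \max(g,f)\right).$$

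\textbf{Step 2: Convergence of the integrand.} For fixed $g \in \bC$, the function $\max(g,f)$ lies in $\bE$, so Proposition~\ref{prop:doa_copula_process} yields $G_n(g,f) \to \dnormf{\max(g,f)}_D$. As in the proof of Theorem~\ref{thm:champions_copula_processes}, monotone convergence extends this to $G_n(g_n,f) \to \dnormf{\max(g,f)}_D$ whenever $\norm{g_n-g}_\infty \to 0$. Since $\bm M^{(n)} \to_{\mathcal D} \bm\eta$, the extended continuous mapping theorem (\citet[Theorem 5.5]{billi68}) gives
$$X_n := G_n(\bm M^{(n)},f) \to_{\mathcal D} \dnormf{\max(\bm\eta,f)}_D.$$

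\textbf{Step 3: Uniform integrability and the numerator limit.} The inequality $\max(g,f) \geq g$ componentwise yields $G_n(g,f) \leq nP(n(\bm U-\bm 1) > g)$, so the same componentwise bound used in the proof of Theorem~\ref{thm:champions_copula_processes} gives $\sup_n E(X_n^2) \leq 2$. Combined with Step~2, this implies $E(X_n) = nN_n(f) \to E\bigl(\dnormf{\max(\bm\eta,f)}_D\bigr)$.

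\textbf{Step 4: The denominator and the ratio.} Theorem~\ref{thm:champions_copula_processes} states $n\bar\pi_n(\bm U) \to E\bigl(\dnormf{\bm\eta}_D\bigr)$, and Lemma~\ref{lemma:other_formula_for_champion_probability}(i) together with the hypothesis $P(\bm Z>0)>0$ ensures that this limit is strictly positive. Dividing the limits from Steps~3 and~4 yields the claimed convergence $\bar H_n(f) \to \bar H_D(f)$. The only mildly delicate point is the uniform-convergence extension $G_n(g_n,f) \to \dnormf{\max(g,f)}_D$, since $\max(g,f)$ need not be continuous; this is handled exactly as in Theorem~\ref{thm:champions_copula_processes} using that the survival-function expansion \eqref{eq:survival_function_expansion} is valid on all of $\bE$.
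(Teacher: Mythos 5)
Your proposal is correct and takes essentially the same route as the paper: the paper also rewrites $\bar H_n(f)$ as the ratio $\Pi_n(f)/\pi_n$, merges the two events into $\left\{n(\bm U-\bm 1)>\max\left(f,\bm M^{(n)}\right)\right\}$, and asserts $n\Pi_n(f)\to_{n\to\infty} E\left(\dnormf{\max(\bm\eta,f)}_D\right)$ ``by repeating the arguments of the proof of Theorem \ref{thm:champions_copula_processes}''. Your Steps 2--4 simply make those repeated arguments explicit (extended continuous mapping theorem, the componentwise second-moment bound for uniform integrability, and positivity of the denominator via Lemma \ref{lemma:other_formula_for_champion_probability}), which the paper leaves to the reader.
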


Note that we avoid division by zero in the preceding formula since we assume $P(\bm Z>0)>0$.

\begin{proof}[Proof of Proposition \ref{prop:survival_function_champion}]
For the ease of notation, we write $\pi_n$ instead of $\bar\pi_n(\bm U)$. We have
\[
\bar H_n(f)=\frac{\Pi_n(f)}{\pi_n}:=\frac{\displaystyle P\left(n(\bm U-1)>f,\bm U>\max_{i=1,\dotsc,n-1}\bm U^{(i)}\right)}{\displaystyle P\left(\bm U>\max_{i=1,\dotsc,n-1}\bm U^{(i)}\right)}.
\]
By Theorem \ref{thm:champions_copula_processes}, it remains to show that for each $f\in\bE$
\begin{align*}
n\Pi_n(f)&=nP\left(n(\bm U- 1)>\max\left(f,\bm M^{(n)}\right)\right)\to_{n\to\infty}E\left(\dnormf{\max(\bm\eta,f)}_D\right),
\end{align*}
where $\bm M^{(n)}:=n\max_{i=1,\dotsc,n-1}\left(\bm U^{(i)}-1\right)$. This can be done by repeating the arguments of the proof of Theorem \ref{thm:champions_copula_processes}.
\end{proof}

Note that another representation of $\bar H_D(f)$ is given by
\begin{equation}\label{eq:survival_function_champion}
\bar H_D(f)=1-\frac{\displaystyle E\left(\left(\norm{1/\bm Z}_D\right)^{-1}\exp\left(\norm{1/\bm Z}_D\sup_{s\in S}(f(s)Z_s)\right)\cdot  1_{\{\bm Z> 0\}}\right)}{E\left(\left(\norm{1/\bm Z}_D\right)^{-1}\cdot  1_{\{\bm Z> 0\}}\right)},
\end{equation}
where $\bm Z$ is a generator of $\norm\cdot_D$. This is due to Lemma \ref{lemma:other_formula_for_champion_probability}.

\begin{exam}
\upshape
For the Marshall-Olkin $D$-norm
\[
\norm{\bm x}_{M_\gamma}:=\gamma\norm{\bm x}_{\infty}+(1-\gamma)\norm{\bm x}_{1},\qquad \bm x\in\R^d,~\gamma\in(0,1),
\]
we obtain with $\bm 1:=(1,\dotsc,1)\in\R^d$
\[
\bar H_{\gamma}(\bm x)=1-\exp\left(\norm{\bm 1}_{M_\gamma}\max_{i=1,\dotsc,d}x_i\right),\qquad \bm x\leq0,
\]
which is the survival function of the max-stable rv $(\eta,\dotsc,\eta)/\norm{\bm 1}_{\gamma}$, where $\eta$ is standard negative exponentially distributed and $\norm{\bm 1}_{M_\gamma}=\gamma+d(1-\gamma)$. Note that this rv has complete dependent and identically distributed univariate margins.
\end{exam}

\begin{proof}
A generator of the Marshall-Olkin $D$-norm $\norm{\cdot}_{M_\gamma}$ is given by
\[
\bm Z:=\xi(1,\dotsc,1)+(1-\xi)\bm Z^\ast,
\]
where $\xi$ is a rv with $P(\xi=1)=\gamma=1-P(\xi=0)$, and $\xi$ is independent of $\bm Z^\ast$ which is a random permutation of the vector $(d,0,\dotsc,0)$ with equal probability $1/d$. Obviously, $P(\bm Z> 0,\xi=0)=0$. On the other hand, $\xi=1$ implies $\bm Z=1$. Thus, we obtain by \eqref{eq:survival_function_champion} for all $\bm x\leq\bm 0$
\begin{align*}
\bar H_{\gamma}(\bm x)&=1-\frac{\displaystyle E\left(\left(\norm{1/\bm Z}_{M_\gamma}\right)^{-1}\exp\left(\norm{1/\bm Z}_{M_\gamma}\max_{i=1,\dotsc,d}(x_iZ_i)\right)\cdot  1_{\{\bm Z> 0,\xi=1\}}\right)}{E\left(\left(\norm{1/\bm Z}_{M_\gamma}\right)^{-1}\cdot  1_{\{\bm Z>0,\xi=1\}}\right)}\\
&=1-\exp\left(\norm{\bm 1}_{M_\gamma}\max_{i=1,\dotsc,d}x_i\right).
\end{align*}
\end{proof}

In order to generalize Proposition \ref{prop:survival_function_champion} to stochastic processes in $C(S)$ with arbitrary margins, the following lemma is needed.

\begin{lemma}\label{lemma:survival_function_champion_uniformly}
Let $f_n$, $n\in\N$, be a sequence of functions in $\bE$ converging uniformly to $f\in\bE$. Then, under the conditions and notation of Proposition \ref{prop:survival_function_champion},
\[
\bar H_n(f_n)=\frac{\Pi_n(f_n)}{\pi_n}\to_{n\to\infty}\bar H_D(f).
\]
\end{lemma}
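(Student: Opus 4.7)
The plan is to mirror the proof of Theorem~\ref{thm:champions_copula_processes}, now with the constant $f$ promoted to the varying sequence $f_n$. By Theorem~\ref{thm:champions_copula_processes} and Lemma~\ref{lemma:other_formula_for_champion_probability}(i), the denominator satisfies $n\pi_n \to E\bigl(\dnormf{\bfeta}_D\bigr)$, and this limit is strictly positive under the assumption $P(\bfZ>\bfzero)>0$. So it is enough to prove
\[
n\Pi_n(f_n) \to_{n\to\infty} E\left(\dnormf{\max(\bfeta,f)}_D\right)
\]
and then divide by $n\pi_n$.

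First I would condition on $\bfM^{(n)} := n\max_{i=1,\dotsc,n-1}(\bfU^{(i)}-\bfone)$, which is independent of $\bfU=\bfU^{(n)}$. Setting $G_n(h):=nP(n(\bfU-\bfone)>h)$ as in the proof of Theorem~\ref{thm:champions_copula_processes}, one obtains
\[
n\Pi_n(f_n) = E\bigl[G_n(\max(f_n,\bfM^{(n)}))\bigr] =: E(X_n).
\]
Following the strategy of that theorem, it then suffices to verify (i) that $X_n \to_{\mathcal D} \dnormf{\max(\bfeta,f)}_D$ and (ii) that $(X_n)_{n\in\N}$ is uniformly integrable, so that the claim follows from \citet[p.~32]{billi68}.

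For (i), the map $(a,b)\mapsto \max(a,b)$ is $1$-Lipschitz in the sup-norm, so uniform convergence $f_n\to f$ combined with $\bfM^{(n)}\to_{\mathcal D}\bfeta$ yields $\max(f_n,\bfM^{(n)}) \to_{\mathcal D} \max(f,\bfeta)$ via the ordinary continuous mapping theorem. From the proof of Theorem~\ref{thm:champions_copula_processes} we already know that $G_n(h_n) \to \dnormf{h}_D$ whenever $h_n\to h$ uniformly with $h\in\bE$ (this is the standard monotone-convergence sandwich using Proposition~\ref{prop:doa_copula_process}). The extended continuous mapping theorem (\citet[Theorem~5.5]{billi68}) therefore gives (i).

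For (ii), the monotonicity of $G_n$ together with $\max(f_n,\bfM^{(n)})\geq \bfM^{(n)}$ gives the pointwise bound $X_n \leq G_n(\bfM^{(n)})$. The estimate $E\bigl(G_n(\bfM^{(n)})^2\bigr)\leq 2$ established at the end of the proof of Theorem~\ref{thm:champions_copula_processes} then yields $\sup_n E(X_n^2) \leq 2$, which is stronger than uniform integrability. I do not foresee a serious obstacle: the entire task is to check that promoting the constant $f$ to a uniformly convergent sequence $f_n$ does not destroy the distributional convergence underlying Theorem~\ref{thm:champions_copula_processes}, and this is precisely what the \emph{extended} (as opposed to ordinary) continuous mapping theorem is designed to provide.
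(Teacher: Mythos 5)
Your proof is correct, but it follows a genuinely different route from the paper's. The paper deduces the lemma from Proposition \ref{prop:survival_function_champion} by a monotonicity sandwich: for large $n$ one has $f-\eps\le f_n\le f+\eps$, hence $\Pi_n(f+\eps)\le\Pi_n(f_n)\le\Pi_n(f-\eps)$, and Proposition \ref{prop:survival_function_champion} applied at the constant perturbations $f\pm\eps$ gives bounds $E\left(\dnormf{\max(\bfeta,f\pm\eps)}_D\right)$, which are then collapsed by letting $\eps\downarrow 0$ and invoking dominated convergence with the integrable envelope $-\eta_{s_0}Z_{s_0}$. You instead re-run the moment-convergence machinery of Theorem \ref{thm:champions_copula_processes} directly on the varying argument: writing $n\Pi_n(f_n)=E\bigl[G_n(\max(f_n,\bfM^{(n)}))\bigr]$, using the $1$-Lipschitz property of $\max$ together with the continuous-convergence property $G_n(h_n)\to\dnormf{h}_D$ to feed the extended continuous mapping theorem, and recycling the second-moment bound $E\bigl(G_n(\bfM^{(n)})^2\bigr)\le 2$ (valid since $G_n$ is antitone and $\max(f_n,\bfM^{(n)})\ge\bfM^{(n)}$) for uniform integrability. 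All steps check out; the only point worth making explicit is that the continuous-convergence claim for $G_n$ must be applied to limits in $\bE$ rather than $\bC$, which is harmless since Proposition \ref{prop:doa_copula_process} is stated on $\bE$. The paper's route is shorter given Proposition \ref{prop:survival_function_champion} as a black box but needs the extra $\eps$-limit and a dominating function; your route avoids the perturbation entirely and in fact subsumes Proposition \ref{prop:survival_function_champion} as the special case $f_n\equiv f$, at the modest cost of re-verifying uniform integrability.
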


\begin{proof}
Let $\eps>0$. Due to the uniform convergence of $f_n$, there exists $N\in\N$ such that $f-\eps\leq f_n\leq f+\eps$ for $n\geq N$. Assume without loss of generality $f+\eps<0$, otherwise consider $\min(f+\eps,0)$.  Clearly, for such $n$,
\[
\Pi_n(f+\eps)\leq\Pi_n(f_n)\leq\Pi_n(f-\eps).
\]
Now with $n\to\infty$, Proposition \ref{prop:survival_function_champion} shows
\[
E\left(\inf_{s\in S}\abs{\max\left(\eta_s,f(s)-\eps\right)}Z_s\right)\leq \lim_{n\to\infty}\Pi_n(f_n)\leq E\left(\inf_{s\in S}\abs{\max\left(\eta_s,f(s)+\eps\right)}Z_s\right).
\]
Now check
\[
\inf_{s\in S}\abs{\max\left(\eta_s,f(s)\pm\eps\right)}Z_s\leq-\eta_{s_0}Z_{s_0},\qquad s_0\in S,
\]
and let $\eps\downarrow0$. The assertion now follows from the dominated convergence theorem.
\end{proof}

We are now ready to generalize Proposition \ref{prop:survival_function_champion} to stochastic processes in $C(S)$ with arbitrary univariate margins. Let $\bm X=(X_s)_{s\in S}$ be a process in $C(S)$ whose univariate marginal dfs $F_s(x)=P(X_s\leq x)$, $x\in\R$, $s\in S$, are continuous and strictly monotonically increasing on their support. Let $\bm\vartheta$ be an MSP with univariate marginal dfs $G_s(x)=P(\vartheta_s\leq x)$, $x\in\R$, $s\in S$. We conclude from \citet[Theorem 2.8]{dehal01} that $\bm X$ is in the max-domain of attraction of $\bm\vartheta$ (in the sense of \eqref{eq:domain_of_attraction}) if and only if the \emph{copula process corresponding to $\bm X$}, namely
\[
\bm U=(U_s)_{s\in S}:=(F_s(X_s))_{s\in S},
\]
is in the max-domain of attraction of the SMSP $\bm\eta=(\eta_s)_{s\in S}=:\left(\log(G_s(\vartheta_s))\right)_{s\in S}$ and the univariate margins fulfill
\begin{equation}\label{eq:convergence_univariate_margins}
F_s(c_n(s)x+d_n(s))^n\to_{n\to\infty}G_s(x),\qquad x\in\R,
\end{equation}
uniformly for $s\in S$ and locally uniformly for $x\in\R$, where $c_n\in C^+(S)$, $d_n\in C(S)$, $n\in\N$, are the norming functions from \eqref{eq:domain_of_attraction}.


\begin{cor}\label{cor:survival_function_champion_arbitrary_margins}
Let $\bm\vartheta$ be an MSP with univariate marginal dfs $G_s$, $s\in S$, and $\bfX^{(1)},\bfX^{(2)},\ldots$ be independent copies of a process $\bfX\in\mathcal D(\bm\vartheta)$ in $C(S)$. Let $c_n\in C^+(S)$, $d_n\in C(S)$, $n\in\N$, be the norming functions from \eqref{eq:domain_of_attraction}, and suppose the univariate margins of $\bm X$ satisfy \eqref{eq:convergence_univariate_margins}.  Put
\[
\bm U=(U_s)_{s\in S}:=(F_s(X_s))_{s\in S},\quad \bm\eta=(\eta_s)_{s\in S}=:\left(\log(G_s(\vartheta_s))\right)_{s\in S},
\]
and let $\norm\cdot_D$ be the $D$-norm of $\bm\eta$. Choose a generator $\bm Z=(Z_s)_{s\in S}$ of $\norm\cdot_D$ and suppose that $P(\bm Z>0)>0$. Then, for $f\in E(S)$ with $\inf_{s\in S}G_s(f(s))>0$,
\[
P\left(\frac{\bm X^{(n)}-d_n}{c_n}>f\Big|\bm X^{(n)}\textnormal{ is a complete record}\right)\to_{n\to\infty} \bar H_D(\psi(f))
\]
where $\psi(f)(s)=\log\left(G_s(f(s))\right)$, $s\in S$.
\end{cor}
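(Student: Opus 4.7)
The plan is to reduce the corollary to Proposition~\ref{prop:survival_function_champion} for the associated copula process $\bm U = (F_s(X_s))_{s\in S}$ by way of Lemma~\ref{lemma:survival_function_champion_uniformly}. The key observation is that since each $F_s$ is strictly monotonically increasing on its support, the monotone coupling $\bm X^{(i)} \leftrightarrow \bm U^{(i)}$ is order preserving componentwise. In particular, the events $\{\bm X^{(n)}\text{ is a complete record}\}$ and $\{\bm U^{(n)}\text{ is a complete record}\}$ coincide almost surely, and
\[
\left\{\frac{\bm X^{(n)}-d_n}{c_n}>f\right\}=\{n(\bm U^{(n)}-\bm 1)>v_n\},
\]
where $v_n(s):=n\bigl(F_s(c_n(s)f(s)+d_n(s))-1\bigr)$. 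Note that $v_n\in\bE$ for $n$ large, since $F_s\le 1$ and $v_n$ inherits the finite discontinuity set of $f$.

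The next, and main, step is to show that $v_n\to\psi(f)$ uniformly on $S$. Set $a_n(s):=F_s(c_n(s)f(s)+d_n(s))$. Since $f\in E(S)$ is bounded, the uniform-in-$s$, locally-uniform-in-$x$ convergence~\eqref{eq:convergence_univariate_margins} delivers $a_n(s)^n\to G_s(f(s))$ uniformly in $s\in S$. The assumption $\inf_{s\in S}G_s(f(s))>0$ keeps the limit bounded away from zero, so the continuous mapping $t\mapsto\log t$ gives $n\log a_n(s)=\log(a_n(s)^n)\to \log G_s(f(s))=\psi(f)(s)$ uniformly in $s$. In particular $a_n(s)\to 1$ uniformly, and writing
\[
v_n(s)=n(a_n(s)-1)=\bigl(n\log a_n(s)\bigr)\cdot\frac{a_n(s)-1}{\log a_n(s)},
\]
the ratio converges uniformly to $1$ (by continuity at $t=1$ of $(t-1)/\log t$), whence $v_n\to\psi(f)$ uniformly.

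Finally, Lemma~\ref{lemma:survival_function_champion_uniformly} applied with $f_n:=v_n$ (or $\min(v_n,0)$ to stay in $\bE$, which does not matter since $v_n\le 0$) yields
\[
\frac{\Pi_n(v_n)}{\pi_n}=P\!\left(n(\bm U^{(n)}-\bm 1)>v_n\,\Big|\,\bm U^{(n)}\text{ is a complete record}\right)\to \bar H_D(\psi(f)),
\]
which, by the identification in the first step, is exactly the claim. The one delicate point is the uniform convergence $v_n\to\psi(f)$: it relies crucially on the uniform-in-$s$ part of~\eqref{eq:convergence_univariate_margins} together with the hypothesis $\inf_{s}G_s(f(s))>0$, which prevents the logarithm from blowing up and legitimises the passage from $a_n^n\to G_s(f)$ to $n(a_n-1)\to\log G_s(f)$ uniformly. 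Everything else is a bookkeeping translation between the $\bm X$ and $\bm U$ pictures.
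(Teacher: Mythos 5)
Your proof is correct and follows essentially the same route as the paper: translate the event to the copula process via strict monotonicity of the $F_s$, show that $\psi_n(f)(s)=n\bigl(F_s(c_n(s)f(s)+d_n(s))-1\bigr)$ converges uniformly to $\psi(f)$, and invoke Lemma \ref{lemma:survival_function_champion_uniformly}. Your factorization through $n\log a_n(s)$ and the ratio $(a_n(s)-1)/\log a_n(s)$ simply spells out the ``elementary arguments'' the paper leaves implicit.
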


\begin{proof}
Denote by $\bm U^{(n)}$ the copula process corresponding to $\bm X^{(n)}$, $n\in\N$. Taking logarithms, \eqref{eq:convergence_univariate_margins} becomes
\begin{equation}\label{eq:uniform_convergence_univariate_margins}
\sup_{s\in S}\abs{n\left(F_s(c_n(s)x+d_n(s))-1\right)-\log\left(G_s(x)\right)}\to_{n\to\infty}0.
\end{equation}
It can be shown by elementary arguments that \eqref{eq:uniform_convergence_univariate_margins} is equivalent to
\begin{align*}
&\sup_{s\in S}\abs{\psi_n(f(s))-\psi(f(s))}:=\\
&\quad\sup_{s\in S}\abs{n\left(F_s(c_n(s)f(s)+d_n(s))-1\right)-\log\left(G_s(f(s))\right)}\to_{n\to\infty}0
\end{align*}
for each $f\in E(S)$ with $\inf_{s\in S}G_s(f(s))>0$. Hence, Lemma \ref{lemma:survival_function_champion_uniformly} and the strict monotonicity of $F_s$ entail
\begin{align*}
&nP\left(\frac{\bm X-d_n}{c_n}>f,\bm X>\max_{i=1,\dotsc,n-1}\bm X^{(i)}\right)\\
&=nP\left(n\left(U_s-1\right)>\psi_n(f(s)),~s\in S,~\bm U>\max_{i=1,\dotsc,n-1}\bm U^{(i)}\right)\\
&\to_{n\to\infty} E\left(\dnormf{\max(\bm\eta,\psi(f))}_D\right).
\end{align*}
\end{proof}

\section{Simple records for multivariate observations}\label{sec:simple_records}

\subsection*{Simple record probability}

In the preceding section, we have investigated the (normalized) probability of a complete record and in particular, its limit, the extremal concurrence probability. Now we will repeat this procedure, this time for the simple record probability. Unlike in the previous section, where we were actually dealing with the probability of having a champion, normalizing the record probability with the factor $n$ does not yield an interpretation in terms of a probability in the simple record case.

The following result is the equivalent of Theorem \ref{thm:champions_copula_processes} and Proposition \ref{prop:survival_function_champion} in the context of multivariate simple records. Let $\bfX,\bfX_1,\bfX_2,\dots$ be i.\,i\,d. rv in $\R^d$ with common continuous df $F$. Recall that $\bfX_n$ is a simple record, if
\[
\bfX_n\not\le \max_{1\leq i\le n-1}\bfX_i,
\]
and $\ubar\pi_n(\bm X)$ denotes the probability of $\bm X_n$ being a simple record within the iid sequence $\bm X_1,\bm X_2,\dotsc$

\begin{theorem}\label{theo:asymptotic_distribution_of_records}
Let $\bfU_1,\bfU_2,\dots$ be independent copies of a rv $\bfU\in\R^d$ following a copula $C$. Suppose that $C\in\mathcal D(G)$ with $G(\bfx)=\exp(-\norm{\bfx}_D)$, $\bfx\le\bfzero\in\R^d$. Let $\bm\eta$ be a rv with df $G$. Then
\[
n\ubar\pi_n(\bm U)\to_{n\to\infty}E\left(\norm{\bm\eta}_D\right),
\]
and
\begin{align*}
&P(n(\bfU_n-\bfone)\le\bfx\mid \bfU_n\textnormal{ is a simple record})\\
&\to_{n\to\infty} H_D(\bfx):=\frac{E(\norm{\min(\bfx,\bfeta)}_D)-\norm{\bfx}_D}{E(\norm{\bfeta}_D)},\qquad \bfx\le\bfzero\in\R^d.
\end{align*}
\end{theorem}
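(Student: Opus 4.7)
The plan is to adapt the conditioning-on-previous-maximum strategy from Theorem \ref{thm:champions_copula_processes} and Proposition \ref{prop:survival_function_champion} to the ``at least one coordinate exceeds'' event that defines a simple record. Set $\bfM_n := n(\max_{1\le i\le n-1}\bfU_i - \bfone)$; by \eqref{eq:doa_copula_process} we have $\bfM_n \to_{\mathcal D} \bfeta$, and $\bfM_n$ is independent of $\bfU_n$. Since $\bfU_n$ fails to be a simple record exactly when $\bfU_n \le \max_{i<n}\bfU_i$, conditioning on $\bfM_n$ gives
$$n\ubar\pi_n(\bfU) = E(G_n(\bfM_n)),\qquad G_n(\bfx) := n\bigl(1 - P(n(\bfU-\bfone) \le \bfx)\bigr).$$
By the multivariate version of \eqref{eq:fdoa_copula_process_expansion} (cf.\ Remark \ref{rem:multivariate_doa}), $G_n(\bfx) \to \norm{\bfx}_D$ pointwise, and the convergence is uniform on compact subsets of $(-\infty,0]^d$, so the extended continuous mapping theorem (\citet[Theorem 5.5]{billi68}) yields $G_n(\bfM_n) \to_{\mathcal D} \norm{\bfeta}_D$. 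To upgrade this to convergence of expectations, I would repeat the uniform integrability argument from Theorem \ref{thm:champions_copula_processes}: a union bound together with uniformity of the margins gives $G_n(\bfx) \le \norm{\bfx}_1$ on $[-n,0]^d$, and the elementary second-moment computation already carried out for one coordinate (yielding $E((M_{n,j})^2)=2n/(n+1)$) shows $\sup_n E(\norm{\bfM_n}_1^2) < \infty$. This establishes the first assertion.

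For the conditional distribution I would use the decomposition
\begin{align*}
P\bigl(n(\bfU_n-\bfone)\le\bfx,\ \bfU_n\text{ is a simple record}\bigr) &= P\bigl(n(\bfU_n-\bfone)\le\bfx\bigr)\\
&\qquad - P\bigl(n(\bfU_n-\bfone)\le\min(\bfx,\bfM_n)\bigr),
\end{align*}
obtained by splitting on whether $\bfU_n \le \max_{i<n}\bfU_i$ and combining the two componentwise constraints on the intersection into $n(\bfU_n-\bfone) \le \min(\bfx, \bfM_n)$. Multiplying by $n$ and conditioning on $\bfM_n$ in the second term, the right-hand side becomes $-G_n(\bfx) + E(G_n(\min(\bfx,\bfM_n)))$, which converges to $E(\norm{\min(\bfx,\bfeta)}_D) - \norm{\bfx}_D$ by the same distributional-convergence-plus-uniform-integrability argument. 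Dividing by $n\ubar\pi_n(\bfU) \to E(\norm{\bfeta}_D)$ produces $H_D(\bfx)$ as stated.

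The main technical obstacle is the uniform integrability, particularly in the second part where the argument of $G_n$ is itself a random quantity varying with $n$; however the pointwise bound $G_n(\bfy) \le \norm{\bfy}_1$ together with $\norm{\min(\bfx,\bfM_n)}_1 \le \norm{\bfx}_1 + \norm{\bfM_n}_1$ reduces this to the uniform $L^2$-bound on $\bfM_n$ already used in the first part. Continuity of the limit maps $\bfy \mapsto \norm{\bfy}_D$ and $\bfy \mapsto \norm{\min(\bfx,\bfy)}_D$ required by the extended continuous mapping theorem is immediate from the triangle inequality for the $D$-norm and the continuity of componentwise minimum.
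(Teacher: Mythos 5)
Your proof is correct, and for the first assertion it takes a genuinely different route from the paper's. The paper obtains $n\ubar\pi_n(\bfU)\to E\left(\norm{\bfeta}_D\right)$ by inclusion--exclusion: it writes the simple-record event as the union $\bigcup_{j}\{U_j>\max_{i<n}U_{i,j}\}$, applies the already-established complete-record limit (Theorem \ref{thm:champions_copula_processes}) to each sub-vector indexed by $\emptyset\neq T\subset\{1,\dots,d\}$ to get $nP\left(U_j>\max_{i<n}U_{i,j},\,j\in T\right)\to E\left(\min_{j\in T}\abs{\eta_j}Z_j\right)$, and then reassembles the signed sum into $E\left(\max_{j}\abs{\eta_j}Z_j\right)=E\left(\norm{\bfeta}_D\right)$ via \eqref{eq:max=min}. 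You instead condition directly on $\bfM_n$ and rerun the distributional-convergence-plus-uniform-integrability scheme for $G_n(\bfx)=n\bigl(1-P(n(\bfU-\bfone)\le\bfx)\bigr)$; this avoids the combinatorics entirely, and your domination $G_n(\bfx)\le\norm{\bfx}_1$ combined with $E\bigl(M_{n,j}^2\bigr)=2n/(n+1)$ is a clean substitute for the paper's single-coordinate second-moment bound. The trade-off is that the paper gets the first limit essentially for free from Theorem \ref{thm:champions_copula_processes}, whereas you must redo the tightness work --- but you need that machinery anyway for the second assertion, where your decomposition $P\left(n(\bfU_n-\bfone)\le\bfx,\ \bfU_n \text{ record}\right)=P\left(n(\bfU_n-\bfone)\le\bfx\right)-P\left(n(\bfU_n-\bfone)\le\min(\bfx,\bfM_n)\right)$ coincides, after passing to complements, with the identity the paper uses. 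One point worth writing out explicitly in a final version: the two probabilities in that decomposition each tend to $1$, so $n$ times each diverges; the passage to $-G_n(\bfx)+E\bigl(G_n(\min(\bfx,\bfM_n))\bigr)$ works because the two leading terms $n$ cancel exactly, which is an algebraic identity rather than a limit statement and should be displayed as such. With that made explicit, nothing is missing.
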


In the one dimensional case $d=1$ we obtain $H_D(x)=\exp(x)$, $x\le 0$. Note, however, that $H_D$ is not a probability df in general. Take, for instance, $\norm\cdot_D=\norm\cdot_1$, which is the largest $D$-norm. In this case the components $\eta_1,\dots,\eta_d$ of $\bfeta$ are independent and we obtain for $\bfx=(x_1,\dots,x_d)\le \bfzero\in\R^d$
\begin{equation*}
H_1(\bfx)= \frac{\sum_{i=1}^d \Big(E(\abs{\min(x_i,\eta_i)})-\abs{x_i}\Big)}{\sum_{i=1}^dE(\abs{\eta_i})}
= \frac{\sum_{i=1}^d\exp(x_i)} d.
\end{equation*}
This is not a probability df on $(-\infty,0]^d$ as, for example, $H_1(\bfx)$ does not converge to zero if only one component $x_i$ converges to $-\infty$. Even more, choose $\bfa\le \bfb\le\bfzero\in\R^d$. If $H_1$ would define a probability measure $Q$ on $(-\infty,0]^d$, then the probability $Q([\bfa,\bfb])$
 were given by
\[
\Delta_{\bfa}^{\bfb}H_1=\sum_{\bfm\in\set{0,1}^m}(-1)^{d-\sum_{1\le j\le d} m_j} H_1\left(b_1^{m_1}a_1^{1-m_1},\dots,b_d^{m_d}a_d^{1-m_d}\right).
\]
But elementary computations show that $\Delta_{\bfa}^{\bfb}H_1=0$, i.e., $Q$ is the null measure on $(-\bfinfty,\bfzero]$.

Instead one can define $Q$ on $[-\infty,0]^d\backslash \set{-\bfinfty}$ by putting for $x_i\le 0$ and $i=1,\dots,d$
\[
Q\big(\set{-\infty}\times\dots\times \set{-\infty}\times (-\infty,x_i]\times \set{-\infty}\times \dots \times\set{-\infty}\big):= \frac {\exp(x_i)}d.
\]
 Then $Q$ has its complete mass on the set $\Big\{\bigcup_{i=1}^d\Big(\set{-\infty}^{i-1}\times(-\infty,0]\times \set{-\infty}^{d-i}\Big)\Big\}$ and
 \begin{align*}
 &Q\left(\times_{i=1}^d[-\infty,x_i]\backslash\set{-\bfinfty}\right)\\
 &= Q\left(\bigcup_{i=1}^d  \left(\set{-\infty}\times\dots\times \set{-\infty}\times (-\infty,x_i]\times \set{-\infty}\times \dots \times\set{-\infty}\right)    \right)\\
 &= \sum_{i=1}^d Q\left(\set{-\infty}\times\dots\times \set{-\infty}\times (-\infty,x_i]\times \set{-\infty}\times \dots \times\set{-\infty}\right)\\
 &= \frac 1d \sum_{i=1}^d \exp(x_i).
 \end{align*}
 This approach is closely related to the formulation of the exponent measure theorem as in \citet{balr77} and \citet{vatan85}.

 Take, on the other hand, $\norm\cdot_D=\norm\cdot_\infty$, which is the least $D$-norm. In this case, the components $\eta_1,\dots,\eta_d$ of $\bfeta$ are completely dependent, i.e., $\eta_1=\eta_2=\dots=\eta_d$ a.s. and, thus,
\begin{align*}
 H_\infty(\bfx)&= E\left(\norm{\left(\min(x_i,\eta_1)\right)_{i=1}^d}_\infty\right)- \norm{\bfx}_\infty\\
 &= E\left(\max(\norm{\bfx}_\infty,\eta_1)\right) - \norm{\bfx}_\infty\\
 &= \exp(-\norm{\bfx}_\infty),\qquad \bfx=(x_1,\dots,x_d)\le \bfzero\in\R^d,
\end{align*}
which is a max-stable distribution (MSD).

\begin{proof}[Proof of Theorem \ref{theo:asymptotic_distribution_of_records}]
Let $\bm Z$ be a generator of $\norm\cdot_D$, independent of $\bm\eta$. Theorem \ref{thm:champions_copula_processes}, the inclusion-exclusion principle and \eqref{eq:max=min} yield
\begin{align*}
n\ubar\pi_n(\bm U)&=nP\left(\bm U\not\leq\max_{i=1,\dotsc,n-1}\bm U_i\right)\\
&=nP\left(\bigcup_{j=1}^d\left\{U_{j}>\max_{i=1,\dotsc,n-1}U_{i,j}\right\}\right)\\
&=\sum_{\emptyset\neq T\subset\{1,\dotsc,d\}}(-1)^{\abs{T}-1}nP\left(U_{j}>\max_{i=1,\dotsc,n-1}U_{i,j},~j\in T\right)\\
&\to_{n\to\infty}\sum_{\emptyset\neq T\subset\{1,\dotsc,d\}}(-1)^{\abs{T}-1}E\left(\min_{j\in T}\abs{\eta_j}Z_j\right)\\
&=E\left(\max_{j=1,\dotsc,d}\abs{\eta_j}Z_j\right)\\
&=E\left(\norm{\bm\eta}_D\right).
\end{align*}
Similarily, one can use Proposition \ref{prop:survival_function_champion} in order to show for $\bm x\leq\bm0\in\R^d$
\[
nP\left(n(\bm U-1)\not\leq \min(\bm x,\bm M_n)\right)\to_{n\to\infty}E\left(\norm{\min(\bm x,\bm\eta)}_D\right),
\]
where $\bm M_n:=n\max_{i=1,\dotsc,n-1}\left(\bm U_n-1\right)\to_{\mathcal D}\bm\eta$. In summary, taking into account \eqref{eq:fdoa_copula_process_expansion}, we obtain
\begin{align*}
&nP\left(\bm U\leq1+\frac{\bm x}{n},\bm U\not\leq\max_{i=1,\dotsc,n-1}\bm U_i\right)\\
&\qquad=nP\left(n(\bm U-1)\not\leq\min\left(\bm x,\bm M_n\right)\right)-nP\left(\bm U\not\leq 1+\frac{\bm x}{n}\right)\\
&\qquad\to_{n\to\infty} E\left(\norm{\min(\bm x,\bm\eta)}_D\right)-\norm{\bm x}_D.
\end{align*}
\end{proof}

The arguments in the preceding proof can easily be repeated to extend Theorem \ref{theo:asymptotic_distribution_of_records} to the case of a general rv $\bfX\in\R^d$, whose df is in the domain of attraction of an MSD. Denote by
\[
C_F(\bm u):=F\left(F_1^{-1}(u_1),\dotsc,F_d^{-1}(u_d)\right),\qquad\bm u=(u_1,\dotsc,u_d)\in[0,1]^d,
\]
the copula of a continuous df $F$ on $\R^d$, where $F_i$ is the $i$-th univariate marginal df and $F_i^{-1}$ its quantile function.

\begin{cor}\label{cor:general_case_simple_record_distribution}
´Let $\bfX_1,\bfX_2,\dots$ be independent copies of a rv $\bfX\in\R^d$, whose df $F$ is continuous and its copula $C_F$ satisfies $C_F\in\mathcal D(G)$, $G(\bfx)=\exp(-\norm{\bfx}_D)$, $\bfx\le\bfzero\in\R^d$. We require in addition that each univariate margin $F_i$ of $F$ is in the domain of attraction of a univariate MSD $G_i$, i.e., there are constants $a_{ni}>0$, $b_{ni}\in\R$, $n\in\N$, such that for $i=1,\dots,d$
\[
n(1-F(a_{ni}x+b_{ni}))\to_{n\to\infty} -\log(G_i(x))=:-\psi_i(x),\qquad x\in\R:\, G_i(x)>0.
\]
Then we obtain with $\bfa_n:=(a_{n1},\dots,a_{nd})$, $\bfb_n:=(b_{n1},\dots,b_{nd})$ and $\bm\psi(\bfx):=(\psi_1(x_1),\dots,\psi_d(x_d))$, $\bfx=(x_1,\dots,x_d)$, $G_i(x_i)>0$, $i=1,\dots,d$:
\begin{align*}
P\left(\frac{\bfX_n-\bfb_n}{\bfa_n}\le\bfx\mid \bfX_n\textnormal{ is a simple  record} \right)\to_{n\to\infty} H_D(\bm\psi(\bfx)).
\end{align*}
\end{cor}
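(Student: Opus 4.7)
The plan is to reduce the general case to the copula-process statement already handled by Theorem \ref{theo:asymptotic_distribution_of_records}. Set $\bm U_n:=(F_1(X_{n,1}),\dots,F_d(X_{n,d}))$, which has uniform margins (since each $F_i$ is continuous) and joint df $C_F\in\mathcal D(G)$ by hypothesis. Because each $F_i$ is monotone and ties occur with probability zero (by continuity of $F$), the events $\{\bm X_n\text{ is a simple record}\}$ and $\{\bm U_n\text{ is a simple record}\}$ agree almost surely; hence the denominator of the conditional probability satisfies $n\ubar\pi_n(\bm X)=n\ubar\pi_n(\bm U)\to E(\norm{\bm\eta}_D)$ by Theorem \ref{theo:asymptotic_distribution_of_records}.

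For the numerator, I would rewrite
\[
\left\{\frac{\bm X_n-\bm b_n}{\bm a_n}\le\bm x\right\}=\left\{\bm U_n\le\bm F(\bm a_n\bm x+\bm b_n)\right\}=\left\{n(\bm U_n-\bm 1)\le\bm y_n\right\},
\]
with $y_{n,i}:=n(F_i(a_{ni}x_i+b_{ni})-1)$. The univariate max-domain of attraction hypothesis gives $y_{n,i}=-n(1-F_i(a_{ni}x_i+b_{ni}))\to\psi_i(x_i)$, so $\bm y_n\to\bm\psi(\bm x)\le\bm 0$ componentwise.

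The core step is then the sequence version of Theorem \ref{theo:asymptotic_distribution_of_records}: for any $\bm y_n\to\bm y\le\bm 0$,
\[
A_n(\bm y_n):=nP\!\left(n(\bm U_n-\bm 1)\le\bm y_n,\,\bm U_n\not\le\max_{i<n}\bm U_i\right)\to E(\norm{\min(\bm y,\bm\eta)}_D)-\norm{\bm y}_D.
\]
This is the multivariate analogue of Lemma \ref{lemma:survival_function_champion_uniformly} in the simple-record setting, and it is the main obstacle, though no genuinely new idea is needed: componentwise monotonicity of the threshold gives, for every $\varepsilon>0$ and $n$ large enough, the sandwich $A_n(\bm y-\varepsilon\bm 1)\le A_n(\bm y_n)\le A_n(\bm y+\varepsilon\bm 1)$ (capping the perturbations at $\bm 0$ if necessary). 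Theorem \ref{theo:asymptotic_distribution_of_records} supplies the limit at each end, and letting $\varepsilon\downarrow 0$ via dominated convergence applied to $E(\norm{\min(\bm y\pm\varepsilon\bm 1,\bm\eta)}_D)$ closes the sandwich.

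Dividing the numerator limit by the denominator limit and recognising the quotient as $H_D(\bm\psi(\bm x))$ finishes the proof.
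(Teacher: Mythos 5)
Your proposal is correct and follows essentially the same route as the paper, which simply reduces to the copula case via the probability integral (quantile) transform $\bfX=\bm F^{-1}(\bfU)$ and then ``repeats the arguments'' of Theorem \ref{theo:asymptotic_distribution_of_records}. You additionally make explicit the one point the paper leaves implicit, namely that the threshold $\bm y_n\to\bm\psi(\bfx)$ moves with $n$ and hence a sandwich/continuity argument in the spirit of Lemma \ref{lemma:survival_function_champion_uniformly} is needed; this is exactly the right gap to fill and your monotonicity-plus-dominated-convergence argument closes it.
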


Note that in the case $d=1$
\[
H_D(\psi(x))=\exp(\psi(x))=G(x),\qquad G(x)>0.
\]
Note, moreover, that the assumptions on the df $F$ in the preceding theorem are equivalent with the condition $F\in\mathcal D(G)$, where $G$ is a $d$-dimensional MSD, together with the condition that $F$ is continuous.

\begin{proof}[Proof of Corollary \ref{cor:general_case_simple_record_distribution}]
Assume the representation
\[
\bfX=\left(F_1^{-1}(U_1),\dots,F_d^{-1}(U_d)\right)=:\bm F^{-1}(\bfU),
\]
where $\bfU=(U_1,\dots,U_d)$ follows the copula $C$ of $\bfX$. Repeating the arguments in the proof of Theorem \ref{theo:asymptotic_distribution_of_records} now implies the assertion.
\end{proof}

In Corollary \ref{cor:number_of_complete_records}, we have investigated the expected number of complete records as the sample size goes to infinity, which can be done for simple records analogously.

\begin{cor}\label{cor:number_of_simple_records}
Denote by $m(n):=\sum_{i=1}^n 1_{\left\{\bfX^{(i)}\not\leq\max_{1\le j<i}\bfX^{(j)}\right\}}$ the number of simple records among $\bfX^{(1)},\ldots,\bfX^{(n)}$. Then
\[
\frac{E(m(n))}{\log(n)}\to_{n\to\infty} E\left(\norm{\bm\eta}_D\right).
\]
\end{cor}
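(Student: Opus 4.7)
The plan is to imitate verbatim the argument used for Corollary~\ref{cor:number_of_complete_records}, replacing the role of Theorem~\ref{thm:champions_copula_processes} by Theorem~\ref{theo:asymptotic_distribution_of_records}. First, by linearity of expectation and the identically distributed structure of the sequence,
\[
E(m(n)) \;=\; \sum_{i=1}^n P\!\left(\bfX^{(i)}\not\leq\max_{1\le j<i}\bfX^{(j)}\right) \;=\; \sum_{i=1}^n \ubar\pi_i(\bfX).
\]
Since the events involved depend only on the relative ranks of the components and the univariate margins are assumed continuous (so that ties have probability zero), one has $\ubar\pi_i(\bfX)=\ubar\pi_i(\bfU)$, where $\bfU$ is the copula rv corresponding to $\bfX$. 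Theorem~\ref{theo:asymptotic_distribution_of_records} therefore gives
\[
a_i \;:=\; i\,\ubar\pi_i(\bfX) \;\longrightarrow\; a \;:=\; E\!\left(\norm{\bm\eta}_D\right) \qquad (i\to\infty).
\]

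The second ingredient is the elementary Cesàro-type lemma already invoked in the proof of Corollary~\ref{cor:number_of_complete_records}: for any real sequence $a_i\to a$, one has $\frac{1}{\log(n)}\sum_{i=1}^n a_i/i \to a$. Applying this yields
\[
\frac{E(m(n))}{\log(n)} \;=\; \frac{1}{\log(n)}\sum_{i=1}^n \frac{a_i}{i} \;\longrightarrow\; a \;=\; E\!\left(\norm{\bm\eta}_D\right),
\]
which is the desired conclusion.

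There is essentially no obstacle here; the work has already been done in Theorem~\ref{theo:asymptotic_distribution_of_records}. The only small point to make explicit is the passage from $\bfX$ to its copula $\bfU$, which is harmless under the standing continuity assumption, and the choice of the correct asymptotic constant (namely $E(\norm{\bm\eta}_D)$ from the simple-record theorem, rather than $E(\dnormf{\bm\eta}_D)$ from the complete-record theorem).
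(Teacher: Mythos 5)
Your proof is correct and is exactly the argument the paper intends: the paper omits a proof, stating only that the complete-record case (Corollary \ref{cor:number_of_complete_records}) "can be done for simple records analogously," and your decomposition $E(m(n))=\sum_{i=1}^n\ubar\pi_i$, the appeal to Theorem \ref{theo:asymptotic_distribution_of_records} for $i\,\ubar\pi_i\to E(\norm{\bm\eta}_D)$, and the Ces\`aro-type lemma are precisely that analogue. The remark on passing from $\bfX$ to its copula $\bfU$ under continuous margins is a correct and welcome explicit touch.
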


The next example shows some connection between the Fr\' echet and the Weibull model, and provides in particular closed formulas for $E\left(\dnormf{\bm\eta}_\lambda\right)$ and $E\left(\norm{\bm\eta}_\lambda\right)$.

\begin{exam}[Fr\' echet model]\label{exam:frechet_records}\upshape
Choose  $\lambda>1$. Let $\bm\eta$ be a max-stable rv in $\R^d$ with df $P(\bm\eta\leq\bm x)=\exp\left(-\norm{\bm x}_\lambda\right)$, $\bm x\leq 0$. Let $\bm Z_F$ a Fr\' echet-based generator of $\norm\cdot_\lambda$ (see Example \ref{exam:frechet}), and $\bm Z_W$ a Weibull-based generator of $\norm\cdot_{W_\lambda}$ (see Example \ref{exam:weibull}).  We know from \citet[Example 1]{domribsto15} that
\[
E\left(\dnormf{\bm\eta}_\lambda\right)=\frac{\Gamma(d-1/\lambda)}{(d-1)!\Gamma(1-1/\lambda)}=\prod_{i=1}^{d-1}\left(1-\frac{1}{\lambda i}\right).
\]
Analogously, one can show
\[
E\left(\norm{\bm Z_W}_\lambda\right)=\frac{\Gamma(d+1/\lambda)}{(d-1)!\Gamma(1+1/\lambda)}=\prod_{i=1}^{d-1}\left(1+\frac{1}{\lambda i}\right).
\]
Futhermore, \eqref{eq:dualdnorm_weibull} together with Lemma \ref{lemma:other_formula_for_champion_probability} yields
\[
E\left(\dnormf{\bm Z_F}_{W_\lambda}\right)=E\left(\norm{1/\bm Z_F}_\lambda^{-1}\right)=E\left(\dnormf{\bm\eta}_\lambda\right).
\]
On the other hand, it is easy to see that $\left(\Gamma(1-1/\lambda)\Gamma(1+1/\lambda)\bm Z_W\right)^{-1}$ is also a generator of $\norm\cdot_\lambda$, which yields in turn
\[
E\left(\dnormf{\bm Z_F}_{W_\lambda}\right)=E\left(\dnormf{\bm Z_W}_\lambda\right).
\]
Altogether, we obtain
\[
E\left(\dnormf{\bm\eta}_\lambda\right)=E\left(\dnormf{\bm Z_W}_\lambda\right),
\]
and hence by \eqref{eq:max=min}
\[
E\left(\norm{\bm\eta}_\lambda\right)=E\left(\norm{\bm Z_W}_\lambda\right)=\prod_{i=1}^{d-1}\left(1+\frac{1}{\lambda i}\right).
\]
\end{exam}

\subsection*{Simple record times}

It is well known that the record times have infinite expectation for a sequence of univariate iid rv with a common continuous df. This is no longer true in the multivariate case. In this section we give a precise characterization.

Let $\bfX_1,\bfX_2,\dots$ be i.\,i\,d. rv in $\R^d$ with common continuous df $F$. We denote by $N(n)$, $n\ge 1$, the \emph{simple record times}, i.e., those subsequent random indices at which a simple record occurs. Precisely, $N(1)=1$, as $\bfX_1$ is, clearly, a simple record, and, for $n\ge 2$,
\[
N(n):=\min\set{j:\;j>N(n-1),\,\bfX_j\not\le\max_{1\le i\le N(n-1)}\bfX_i}.
\]

As the df $F$ is continuous, the distribution of $N(n)$ does not depend on $F$ and, therefore, we assume in what follows without loss of generality that $F$ is a \emph{copula} $C$ on $\R^d$, i.e., each component of $\bfX_i$ is on $(0,1)$ uniformly distributed.

Conditioning on $\bm X_1=\bm u$ yields for $j\ge 2$
\begin{align*}
P(N(2)=j) &= P(\bfX_2\le \bfX_1,\dots,\bfX_{j-1}\le \bfX_1,\bfX_j\not\le \bfX_1)\\
&= \int_{[0,1]^d} C(\bfu)^{j-2}(1-C(\bfu))\,C(\d\bfu).
\end{align*}
Solving the geometric series, we get
\begin{equation}\label{eqn:expectation_of_N(2)}
E(N(2))= \sum_{j=2}^\infty j P(N(2)=j)=\int_{[0,1]^d} \frac{1}{1-C(\bfu)}\,C(\d\bfu) + 1.
\end{equation}
Now we generalize this formula. Choose $n\in\N$. Partitioning the sample space in disjoint events, we obtain for $k_n\geq 1$
\begin{align*}
&P\left(N(n+1)-N(n)=k_n\right)=\\
&=\sum_{k_1=2}^{\infty}\sum_{k_2=1}^{\infty}\cdots\sum_{k_{n-1}=1}^\infty P\left(N(n+1)=\sum_{j=1}^{n}k_j,N(n)=\sum_{j=1}^{n-1}k_j,\dotsc,N(2)=k_1\right),
\end{align*}
and further, similar to the calculation above,
\begin{align*}
&P\left(N(n+1)=\sum_{j=1}^{n}k_j,N(n)=\sum_{j=1}^{n-1}k_j,\dotsc,N(2)=k_1\right)=\\
&\int_{\{\bm u_n\not\leq\cdots\not\leq\bm u_1\}}C(\bm u_1)^{k_1-2}C(\bm u_2)^{k_2-1}\cdots C(\bm u_n)^{k_n-1}(1-C(\bm u_n))~ C(\d\bm u_1)\cdots C(\d\bm u_n).
\end{align*}
Hence, solving all the occuring geometric series yields
\begin{align*}
&E\left(N(n+1)-N(n)\right)=\int_{\{\bm u_n\not\leq\cdots\not\leq\bm u_1\}}\prod_{i=1}^n\frac1{1-C(\bm u_i)}~ C(\d\bm u_1)\cdots C(\d\bm u_n).
\end{align*}
Furthermore, it is easy to see that for all $k\geq 1$
\begin{equation}\label{eq:geometric_distribution}
P\left(N(n+1)-N(n)=k|\bm X_{N(n)}=\bm u\right)=C(\bm u)^{k-1}(1-C(\bm u)),
\end{equation}
which means that $N(n+1)-N(n)|\bm X_{N(n)}=\bm u$ is geometrically distributed with parameter $1-C(\bm u)$. We summarize these results.

\begin{lemma}\label{lem:distribution_of_N(n)}
Let $\bm X_1,\bm X_2,\dotsc$ be iid rv following a copula $C$ on $[0,1]^d$, and denote by $N(n)$ the $n$-th simple record time, $n\in\N$.
\begin{enumerate}[(i)]
\item
For every $n\in\N$
\[
N(n+1)-N(n)|\bm X_{N(n)}=\bm u\sim\Geom(1-C(\bm u)),
\]
where $\Geom(p)$ denotes the geometric distribution with support $\{1,2,\dotsc\}$ and parameter $p\in(0,1]$.
\item For every $n\in\N$,
\begin{equation}\label{eq:expectation_of_N(n)}
E\left(N(n+1)-N(n)\right)=\int_{\{\bm u_n\not\leq\cdots\not\leq\bm u_1\}}\prod_{i=1}^n\frac1{1-C(\bm u_i)}~ C(\d\bm u_1)\cdots C(\d\bm u_n).
\end{equation}
\end{enumerate}
\end{lemma}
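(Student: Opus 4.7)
The plan is to prove (i) as a direct consequence of the strong Markov property applied at the stopping time $N(n)$, and then deduce (ii) by taking expectations and recognising the resulting integrand from iterated conditioning. Both formulas are essentially displayed in the calculation preceding the statement, so my task is to organise that calculation into a clean argument.

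For (i), I would note that $N(n)$ is a stopping time with respect to the natural filtration of $(\bm X_k)_{k\geq 1}$, so the strong Markov property gives that, conditional on the past up to $N(n)$, the sequence $\bm X_{N(n)+1},\bm X_{N(n)+2},\dotsc$ is iid with copula $C$ and independent of that past. On $\{\bm X_{N(n)}=\bm u\}$, where $\bm u$ coincides with the componentwise running maximum $\max_{1\leq i\leq N(n)}\bm X_i$, the excess $N(n+1)-N(n)$ equals the first index $k$ at which $\bm X_{N(n)+k}\not\leq\bm u$. Each preceding independent trial produces an observation $\leq\bm u$ with probability $C(\bm u)$, yielding $P(N(n+1)-N(n)=k\mid\bm X_{N(n)}=\bm u)=C(\bm u)^{k-1}(1-C(\bm u))$, which is the $\Geom(1-C(\bm u))$ law.

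For (ii), iterated application of (i) and the strong Markov property determines the joint law of $(\bm X_{N(1)},\dotsc,\bm X_{N(n)})$: conditional on $\bm X_{N(i)}=\bm u_i$, the next record value $\bm X_{N(i+1)}$ has conditional density with respect to $C(\d\bm u_{i+1})$ equal to $\mathbb 1\{\bm u_{i+1}\not\leq\bm u_i\}/(1-C(\bm u_i))$, after summing the geometric tail $\sum_{k\geq 1}C(\bm u_i)^{k-1}=1/(1-C(\bm u_i))$. Telescoping these conditional densities produces the joint law on $\{\bm u_n\not\leq\cdots\not\leq\bm u_1\}$ with density $\prod_{i=1}^{n-1}1/(1-C(\bm u_i))$ relative to $C(\d\bm u_1)\cdots C(\d\bm u_n)$. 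Combining with the tower identity $E(N(n+1)-N(n))=E\bigl[1/(1-C(\bm X_{N(n)}))\bigr]$, which contributes the last factor $1/(1-C(\bm u_n))$, yields \eqref{eq:expectation_of_N(n)}.

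The main technical obstacle is justifying the exchange of summation and integration that turns the sum over $(k_1,\dotsc,k_n)$ appearing in the preceding display into the closed-form integral over $\{\bm u_n\not\leq\cdots\not\leq\bm u_1\}$; this is an application of Tonelli's theorem, legitimised by the non-negativity of all integrands and the absolute convergence of each geometric series $\sum_{k\geq 1}C(\bm u_i)^{k-1}(1-C(\bm u_i))=1$.
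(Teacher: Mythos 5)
Your route is the same one the paper takes: condition on the record values, observe that the inter-record gap is geometric, and sum the geometric series to obtain the integral formula (the paper's ``proof'' is precisely the telescoping computation carried out in the text preceding the lemma, which the statement merely summarizes). There is, however, a genuine gap, and it occurs exactly at the point where you supply the justification that the paper dismisses with ``it is easy to see'': you assert that on $\{\bfX_{N(n)}=\bfu\}$ the value $\bfu$ \emph{coincides with the componentwise running maximum} $\max_{1\le i\le N(n)}\bfX_i$. For \emph{simple} records in dimension $d\ge 2$ and $n\ge 2$ this is false: the defining condition $\bfX_{N(n)}\not\le\max_{1\le i\le N(n)-1}\bfX_i$ only forces the new record to exceed the running maximum in \emph{at least one} coordinate, so $\max_{1\le i\le N(n)}\bfX_i=\max\bigl(\bfX_{N(1)},\dots,\bfX_{N(n)}\bigr)$ will in general exceed $\bfX_{N(n)}$ in other coordinates. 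Consequently each trial after time $N(n)$ fails to be a record with probability $C\bigl(\max_{i\le n}\bfX_{N(i)}\bigr)$, not $C(\bfX_{N(n)})$; given the full record history the gap is $\Geom\bigl(1-C(\max_{i\le n}\bfX_{N(i)})\bigr)$, and given only $\bfX_{N(n)}=\bfu$ it is a \emph{mixture} of such geometrics over the conditional law of the running maximum, which is not geometric in general. The same issue propagates into your telescoping density for (ii): the factor attached to the $i$-th record should be $1/\bigl(1-C(\max(\bfu_1,\dots,\bfu_i))\bigr)$ with the constraint $\bfu_{i+1}\not\le\max(\bfu_1,\dots,\bfu_i)$, rather than $1/(1-C(\bfu_i))$ with $\bfu_{i+1}\not\le\bfu_i$.

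Everything you wrote is correct for $n=1$, where $N(1)=1$ and $\bfX_{N(1)}$ genuinely is the running maximum; hence the formula \eqref{eqn:expectation_of_N(2)} for $E(N(2))$ — the only case actually used in Lemma \ref{lem:characterization_of_E(N(2))} and Propositions \ref{prop:expectation_of_N(2)_is_infinite} and \ref{prop:finiteness_of_E(N(2))} — stands, and your strong-Markov and Tonelli bookkeeping is otherwise sound. To make the general case rigorous you would either have to condition on the running maximum $\bfM_n:=\max_{i\le n}\bfX_{N(i)}$ instead of on $\bfX_{N(n)}$ (the geometric law and the analogue of \eqref{eq:expectation_of_N(n)} then hold verbatim with $C(\bfM_n)$ and nested maxima in place of $C(\bfX_{N(n)})$ and the pairwise constraints), or restrict to $d=1$ or to complete records, where the record value and the running maximum agree. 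Note that the paper's own displayed computation uses the same factors $C(\bfu_i)^{k_i-1}$ and the same region $\{\bfu_n\not\le\dots\not\le\bfu_1\}$, so you have faithfully reproduced its argument; making the hidden step explicit is what exposes the problem.
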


Suppose now that $d=1$. Then we have $\bfu=u\in[0,1]$, $C(u)=u$ and
\[
E(N(2)) = \int_0^1 \frac 1{1-u}\,du+1=\infty,
\]
which is well-known (\citet[Theorem 6.2.1]{gal87}).

Suppose next that $d\ge 2$ and that the margins of $C$ are independent, i.e.,
\[
C(\bfu) = \prod_{i=1}^d u_i,\qquad \bfu=(u_1,\dots,u_d)\in[0,1]^d.
\]
Then we obtain
\begin{equation*}
\int_{[0,1]^d} \frac{1}{1-C(\bfu)}\,C(\d\bfu) = \int_0^1\dots\int_0^1 \frac{1}{1-\prod_{i=1}^d u_i}\, \d u_1\dots \d u_d <\infty
\end{equation*}
by elementary arguments and, thus, $E(N(2))<\infty$. This observation gives rise to the problem of characterizing those copulas $C$ on $[0,1]^d$ with $d\ge 2$, such that $E(N(2))$ is finite. Note that $E(N(2))=\infty$ if the components of $C$ are completely dependent.

The next lemma characterizes finiteness of $E(N(2))$.

\begin{lemma}\label{lem:characterization_of_E(N(2))}
Let $\bfX=(X_1,\dots,X_d)$ follow a copula $C$ on $\R^d$. Then $E(N(2))<\infty$ iff
\begin{equation}\label{eqn:characterization_of_E(N(2))}
\int_1^\infty P\left(X_i> 1-\frac 1 t,\,1\le i\le d\right)\,\d t < \infty.
\end{equation}
\end{lemma}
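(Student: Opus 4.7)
The plan is to apply \eqref{eqn:expectation_of_N(2)} and sandwich the resulting integrand between two quantities involving the joint survival probability $\bar C\bigl((1-u)\bfone\bigr) := P(X_1 > 1-u,\dotsc,X_d > 1-u)$. Writing $W := 1/(1-C(\bfX))\ge 1$, the layer-cake identity combined with the substitution $u=1/s$ turns \eqref{eqn:expectation_of_N(2)} into
\[
E(N(2)) = 2 + \int_0^1 P\bigl(C(\bfX) > 1-u\bigr)\,\frac{\d u}{u^2},
\]
while the substitution $u=1/t$ gives
\[
\int_1^\infty P\bigl(X_i > 1-\tfrac1t,\;1\le i\le d\bigr)\,\d t = \int_0^1 \bar C\bigl((1-u)\bfone\bigr)\,\frac{\d u}{u^2}.
\]
The claim therefore reduces to proving that these two integrals over $(0,1)$ are simultaneously finite.

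The key ingredient is the inclusion sandwich
\[
\bigl\{C(\bfX) > 1-u\bigr\} \;\subset\; \bigl\{\bfX > (1-u)\bfone\bigr\} \;\subset\; \bigl\{C(\bfX) > 1 - du\bigr\},
\]
the right inclusion being valid for $u<1/d$. For the left inclusion, let $\bfY$ denote an independent copy of $\bfX$: since $1 - C(\bfx) = P(\exists j: Y_j > x_j\mid \bfx) \ge P(Y_j > x_j) = 1-x_j$ for every $j$, the event $1 - C(\bfX) < u$ forces $X_j > 1-u$ for all $j$. For the right inclusion, the union bound gives $1 - C(\bfx) \le \sum_j (1-x_j)$, so $\bfX > (1-u)\bfone$ forces $1 - C(\bfX) < du$.

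Taking probabilities and integrating against $\d u/u^2$ over $(0,1)$, the left inclusion yields
\[
\int_0^1 P\bigl(C(\bfX) > 1-u\bigr)\,\frac{\d u}{u^2} \;\le\; \int_0^1 \bar C\bigl((1-u)\bfone\bigr)\,\frac{\d u}{u^2},
\]
while the right inclusion, after splitting at $u=1/d$ and substituting $v=du$ on the portion $u<1/d$, gives
\[
\int_0^1 \bar C\bigl((1-u)\bfone\bigr)\,\frac{\d u}{u^2} \;\le\; d\int_0^1 P\bigl(C(\bfX) > 1-v\bigr)\,\frac{\d v}{v^2} + \mathrm{const}.
\]
The stated equivalence is then immediate. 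The main obstacle is spotting the correct sandwich comparison; once the two inclusions above are recognised, the remainder is a routine Fubini and change-of-variables exercise.
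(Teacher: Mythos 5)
Your proof is correct and follows essentially the same route as the paper's: both reduce \eqref{eqn:expectation_of_N(2)} to a layer-cake integral for $1/(1-C(\bfX))$ and then sandwich $P(C(\bfX)>1-u)$ between the joint survival probability and a rescaled copy of itself via the two Fr\'echet--Hoeffding bounds (your independent-copy and union-bound arguments are just these bounds in disguise), finishing with the same substitution by the factor $d$.
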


Condition \eqref{eqn:characterization_of_E(N(2))} is trivially satisfied in case of independent components $X_1,\dots,X_d$ and $d\ge 2$. Below we will see that it is, roughly, in general satisfied, if there are at least two components that are asymptotically independent.

\begin{proof}[Proof of Lemma \ref{lem:characterization_of_E(N(2))}]
Any copula $C$ satisfies the Fr\' echet-Hoeffding bounds, that is, for $\bm u=(u_1,\dotsc,u_d)\in[0,1]^d$,
\begin{equation}\label{eq:frechet_hoeffding}
\max\left(1-d+\sum_{i=1}^du_i,0\right)\leq C(\bm u)\leq\min\left(u_1,\dotsc,u_d\right).
\end{equation}
Therefore, we obtain due to the upper bound in \eqref{eq:frechet_hoeffding}
\begin{align*}
E(N(2))-1&=\int_{[0,1]^d}\frac{1}{1-C(\bfu)}\, C(\d\bfu)\\
&=E\left(\frac1{1-C(\bm X)}\right)\\
&=\int_1^\infty P\left(C(\bm X)>1-\frac1t\right)~\d t\\
&\leq\int_1^\infty P\left(X_i>1-\frac1t,1\leq i\leq d\right)~\d t.
\end{align*}
On the other hand, the lower bound in \eqref{eq:frechet_hoeffding} yields
\begin{align*}
E(N(2))-1&\geq \int_1^\infty P\left(\sum_{i=1}^d\left(1-X_i\right)<\frac1t \right)~\d t\\
&\geq \int_1^\infty P\left(1-X_i<\frac1{dt},1\leq i\leq d \right)~\d t\\
&=d\int_{1/d}^\infty P\left(1-X_i<\frac1{t},1\leq i\leq d \right)~\d t.
\end{align*}

\end{proof}

Let $C$ be a copula that is in the domain of attraction of a (standard) max-stable df $G$ on $\R^d$, i.\,e.
\[
C^n\left(1+\frac{\bm x}{n}\right)\to_{n\to\infty} G(\bm x),\qquad \bm x\leq0.
\]
which we abbreviate by $C\in\mathcal D(G)$. Recall that Proposition \ref{prop:doa_copula_process} also applies to the multivariate case, see also Remark \ref{rem:multivariate_doa}.

\begin{prop}\label{prop:expectation_of_N(2)_is_infinite}
Suppose that $C\in\mathcal D(G)$, where the $D$-norm corresponding to $G$ satisfies $\dnormf{\bfone}_D>0$. Then $E(N(2))=\infty$.
\end{prop}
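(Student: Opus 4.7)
The plan is to combine Lemma \ref{lem:characterization_of_E(N(2))} with the multivariate version of the survival-function expansion in Proposition \ref{prop:doa_copula_process} (see Remark \ref{rem:multivariate_doa}) to show that the integrand in \eqref{eqn:characterization_of_E(N(2))} decays no faster than $c/t$ as $t\to\infty$ for a positive constant $c$, whence the integral diverges.

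First, by Lemma \ref{lem:characterization_of_E(N(2))} it suffices to verify that
\[
I:=\int_1^\infty P\!\left(X_i>1-\tfrac1t,\,1\le i\le d\right)\d t=\infty.
\]
The multivariate analogue of \eqref{eq:survival_function_expansion}, applied to $f=-\bfone\in\R^d$, gives
\[
n\,P\!\left(\bfX>\bfone-\tfrac1n\bfone\right)=n\,P\!\left(n(\bfX-\bfone)>-\bfone\right)\;\longrightarrow\;\dnormf{\bfone}_D=E\!\left(\min_{1\le j\le d}Z_j\right)>0
\]
as $n\to\infty$, by the assumption that $\dnormf{\bfone}_D>0$.

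Next I pass from this integer limit to a lower bound on the integrand for all large $t$, exploiting monotonicity. For $t\in[n,n+1]$ we have $\bfone-\tfrac1t\bfone\le\bfone-\tfrac1{n+1}\bfone$ componentwise, so $\{\bfX>\bfone-\tfrac1t\bfone\}\supseteq\{\bfX>\bfone-\tfrac1{n+1}\bfone\}$. Fix $\eps\in(0,\dnormf{\bfone}_D)$ and choose $N\in\N$ such that $nP(\bfX>\bfone-\tfrac1n\bfone)\ge \dnormf{\bfone}_D-\eps$ for all $n\ge N$. Then for $t\in[n,n+1]$ with $n\ge N$,
\[
P\!\left(\bfX>\bfone-\tfrac1t\bfone\right)\;\ge\;P\!\left(\bfX>\bfone-\tfrac1{n+1}\bfone\right)\;\ge\;\frac{\dnormf{\bfone}_D-\eps}{n+1}.
\]

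Integrating and summing gives
\[
I\;\ge\;\sum_{n=N}^\infty\int_n^{n+1}\frac{\dnormf{\bfone}_D-\eps}{n+1}\,\d t\;=\;(\dnormf{\bfone}_D-\eps)\sum_{n=N}^\infty\frac{1}{n+1}\;=\;\infty,
\]
so $E(N(2))=\infty$ by Lemma \ref{lem:characterization_of_E(N(2))}. The only non-routine point is the transition from the integer-indexed doa limit to a uniform lower bound along a continuum of thresholds, which is handled by the elementary monotonicity argument above; everything else is a direct application of the cited results.
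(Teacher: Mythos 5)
Your proposal is correct and follows essentially the same route as the paper: both reduce the problem to the divergence of the integral in Lemma \ref{lem:characterization_of_E(N(2))}, apply the multivariate version of \eqref{eq:survival_function_expansion} to get $nP(\bfX>\bfone-\tfrac1n\bfone)\to\dnormf{\bfone}_D>0$, and conclude by comparison with the harmonic series. The only difference is cosmetic: you spell out the integral-to-sum reduction via an explicit monotonicity argument, whereas the paper invokes the equivalence of the integral and the series as a known fact and then uses the limit comparison test.
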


\begin{proof}
Let $\bm X$ be a rv with df $C$. It is well known from real analyis that
\[
\int_1^\infty P\left(\bm X> 1-\frac 1 t\right)\,\d t < \infty\iff \sum_{n=1}^\infty P\left(\bm X>1-\frac1n\right)<\infty.
\]
From \eqref{eq:survival_function_expansion}, we know $nP\left(\bm X>1-\frac1n\right)\to_{n\to\infty}\dnormf{\bm 1}_D>0$. Now applying the limit comparison test for an infinite series, we deduce that $\sum_{n=1}^\infty P\left(\bm X>1-\frac1n\right)$ has the same limit behaviour as the harmonic series $\sum_{i=1}^\infty\frac1n$, and hence, $E(N(2))=\infty$ by Lemma \ref{lem:characterization_of_E(N(2))}.

\end{proof}

Suppose that $C\in\mathcal D(G)$. A finite expectation $E(N(2))<\infty$ can, therefore, only occur if $\dnormf{\bfone}_D=0$, which is true, for instance, if $G$ has at least two independent margins.

Let $\bm X$ follow the df $C$. Next we show that $E(N(2))$ is typically finite if $\bm X$ has at least two components $X_j,X_k$ which are tail independent, i.\,e.
\[
\lim_{u\uparrow 1}P(X_k>u|X_j>u)=0.
\]
In case the limit exists, define the dependence measure
\[
\bar\chi:= \lim_{u\uparrow 1}\frac{ 2\log(1-u)}{\log(P(X_1>u,X_2>u))} - 1\in[-1,1],
\]
where $(X_1,X_2)$ follows some bivariate copula, cf. \citet{colht99, hef00}. Note that we have $\bar\chi=1$ if $X_1,X_2$ are tail dependent. In the class of (bivariate) copulas that are tail independent, however, $\bar\chi$ is a popular measure of tail comparison. For a bivariate normal copula with coefficient of correlation $\rho\in(-1,1)$ it is, for instance, well known that $\bar\chi=\rho$.

\begin{prop}\label{prop:finiteness_of_E(N(2))}
Let $\bfX=(X_1,\dots,X_d)$ follow a copula $C$ in $\R^d$. Suppose that there exist indices $k\not=j$ such that
\[
\bar \chi_{k,j}= \lim_{u\uparrow 1}\frac{ 2\log(1-u)}{\log(P(X_k>u,X_j>u))} - 1\in(-1,1).
\]
Then we have $E(N(2))<\infty$.
\end{prop}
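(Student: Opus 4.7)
The plan is to apply the characterization in Lemma \ref{lem:characterization_of_E(N(2))}, so the goal reduces to showing
\[
\int_1^\infty P\!\left(X_i > 1-\tfrac{1}{t},\ 1\le i\le d\right)\,\d t < \infty.
\]
Since $P(X_i>1-1/t,\,1\le i\le d)\le P(X_k>1-1/t,\,X_j>1-1/t)$, it suffices to control the bivariate tail of the pair $(X_k,X_j)$ singled out by the hypothesis.

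The key step is to translate the assumption $\bar\chi_{k,j}\in(-1,1)$ into a polynomial upper bound on $P(X_k>u,X_j>u)$. From the definition,
\[
\frac{2\log(1-u)}{\log P(X_k>u,X_j>u)}\To \bar\chi_{k,j}+1 \in (0,2) \quad \text{as } u\uparrow 1,
\]
so $\log P(X_k>u,X_j>u)\sim \frac{2}{\bar\chi_{k,j}+1}\log(1-u)$. Because $\bar\chi_{k,j}+1<2$, one may fix $\alpha\in\bigl(1,\,2/(\bar\chi_{k,j}+1)\bigr)$; then, since both sides of the asymptotic relation are negative and tend to $-\infty$, there exists $u_0\in(0,1)$ with
\[
\log P(X_k>u,X_j>u)\le \alpha\log(1-u),\qquad u\in(u_0,1),
\]
that is, $P(X_k>u,X_j>u)\le (1-u)^{\alpha}$ on that interval.

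Substituting $u=1-1/t$, this yields $P(X_k>1-1/t,\,X_j>1-1/t)\le t^{-\alpha}$ for all $t\ge t_0:=1/(1-u_0)$. Splitting the integral at $t_0$ gives
\[
\int_1^\infty P\!\left(X_i>1-\tfrac1t,\,1\le i\le d\right)\d t\;\le\;(t_0-1)+\int_{t_0}^\infty t^{-\alpha}\,\d t\;<\;\infty,
\]
since $\alpha>1$. By Lemma \ref{lem:characterization_of_E(N(2))}, $E(N(2))<\infty$.

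The only delicate point is the direction of the inequalities when converting the asymptotic ratio into a one-sided bound, since $\log(1-u)$ and $\log P(X_k>u,X_j>u)$ are both large negative; but once one observes that dividing by a negative quantity flips the inequality and that $\alpha<2/(\bar\chi_{k,j}+1)$ is the correct choice to obtain a bound useful for convergence, the argument is routine. No integrability of the marginal tails is needed because the bivariate tail already decays polynomially of order strictly larger than one.
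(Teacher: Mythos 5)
Your proof is correct and takes essentially the same route as the paper's: both arguments bound the $d$-dimensional joint tail by the bivariate tail of the pair $(X_k,X_j)$, exploit that $\log P(X_k>u,X_j>u)/\bigl(2\log(1-u)\bigr)\to 1/(1+\bar\chi_{k,j})>1/2$ to obtain a bound of the form $t^{-\alpha}$ with $\alpha>1$ after the substitution $u=1-1/t$, and then invoke Lemma \ref{lem:characterization_of_E(N(2))}. The only difference is presentational: the paper writes the integrand as $\exp\bigl(\text{ratio}\cdot\log(1/t^2)\bigr)$ and leaves the final integrability step implicit, whereas you make the choice of the exponent $\alpha\in\bigl(1,\,2/(\bar\chi_{k,j}+1)\bigr)$ and the sign-flip in the inequality explicit.
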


\begin{cor}
We have $E(N(2))<\infty$ for multivariate normal rv unless all components are completely dependent.
\end{cor}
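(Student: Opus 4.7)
The plan is to reduce to Proposition \ref{prop:finiteness_of_E(N(2))} by exhibiting a pair of components $(X_k, X_j)$ whose tail dependence coefficient $\bar\chi_{k,j}$ lies in $(-1,1)$. The key input is the classical identity $\bar\chi = \rho$ for a bivariate Gaussian copula with correlation $\rho \in (-1,1)$, which the authors recalled in the paragraph preceding the proposition.

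Since the multivariate normal vector $\bm X = (X_1,\dots,X_d)$ is not completely dependent, its correlation matrix is not the all-ones matrix, so there exists a pair $k \neq j$ with $\rho_{k,j} < 1$. Suppose first that $\rho_{k,j} \in (-1,1)$. Then applying the identity to the bivariate marginal $(X_k, X_j)$ gives $\bar\chi_{k,j} = \rho_{k,j} \in (-1,1)$, and Proposition \ref{prop:finiteness_of_E(N(2))} applied to $\bm X$ immediately yields $E(N(2)) < \infty$.

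The only remaining boundary case is $\rho_{k,j} = -1$ (for which $\bar\chi_{k,j} = -1$ falls outside the scope of Proposition \ref{prop:finiteness_of_E(N(2))}). Here $X_k$ and $X_j$ are affinely related with negative slope, so in the associated copula one has $U_k = 1 - U_j$ almost surely. Hence $P(U_k > 1 - 1/t,\, U_j > 1 - 1/t) = 0$ for every $t > 2$, which forces $P(X_i > 1 - 1/t,\, 1 \le i \le d) = 0$ for such $t$, and the integral in condition \eqref{eqn:characterization_of_E(N(2))} is trivially finite. Lemma \ref{lem:characterization_of_E(N(2))} then gives $E(N(2)) < \infty$ directly. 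The main obstacle is recognizing that this degenerate countermonotone case escapes Proposition \ref{prop:finiteness_of_E(N(2))} and must be dispatched by a direct check against the characterization in Lemma \ref{lem:characterization_of_E(N(2))}.
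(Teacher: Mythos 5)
Your proposal is correct and follows the same (implicit) route as the paper: the corollary is intended as an immediate consequence of Proposition \ref{prop:finiteness_of_E(N(2))} combined with the fact $\bar\chi=\rho$ for a bivariate normal copula, exactly as you argue. Your separate treatment of the countermonotone case $\rho_{k,j}=-1$ via the direct criterion of Lemma \ref{lem:characterization_of_E(N(2))} is a valid and welcome piece of extra care, since that boundary case indeed falls outside the hypothesis $\bar\chi_{k,j}\in(-1,1)$ of the proposition and the paper passes over it in silence.
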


\begin{proof}[Proof of Proposition \ref{prop:finiteness_of_E(N(2))}] We have
\begin{align*}
&\int_{1}^\infty P\left(X_i\ge 1-\frac 1 t,\,1\le i\le d\right)\,\d t\\
&\le \int_{1}^\infty P\left(X_k\ge 1-\frac 1 t,\,X_j \ge 1-\frac 1 t\right)\,\d t\\
&= \int_{1}^\infty \exp\left(\frac{\log\left(P\left(X_k\ge 1-\frac 1 t,\,X_j \ge 1-\frac 1 t\right) \right)}{\log\left(\frac 1 {t^2}\right)}\log\left(\frac 1 {t^2}\right)\right)\,\d t,
\end{align*}
where
\[
\frac{\log\left(P\left(X_k\ge 1-\frac 1 t,\,X_j \ge 1 -\frac 1 t\right) \right)}{\log\left(\frac 1 {t^2}\right)} \to_{t\to\infty} \frac 1{1+\bar\chi} > \frac 1 2.
\]
But this implies that the above integral is finite and, thus, the assertion is a consequence of Lemma \ref{lem:characterization_of_E(N(2))}.
\end{proof}

Next we investigate $E(N(n))$ for $n\geq 2$. As before, let $\bm X_1,\bm X_2,\cdots$ be an iid sequence of rv on $\R^d$ following a copula $C$.  Obviously, $E(N(2))=\infty$ implies $E(N(n))=\infty$ for $n\ge 2$, since $N(n)\ge N(2)$, $n\ge 2$. On the other hand, if $E(N(2))<\infty$, we obtain due to \eqref{eq:expectation_of_N(n)} for all $n\geq 2$
\begin{align*}
E\left(N(n+1)-N(n)\right)&=E\left(\frac{1}{1-C(\bm X_1)}\cdots\frac1{1-C(\bm X_n)}1_{\{\bm X_n\not\leq\cdots\not\leq\bm X_1\}}\right)\\
&\leq E\left(\frac{1}{1-C(\bm X_1)}\cdots\frac1{1-C(\bm X_n)}\right)\\
&\leq\left[E\left(\frac{1}{1-C(\bm X_1)}\right)\right]^n\\
&=\left[E\left(N(2)\right)-1\right]^n,
\end{align*}
which means that $E(N(n+1)-N(n))<\infty$ as well in that case. Furthermore, we will show below that $E(N(n+1)-N(n))=\infty$ for all $n\in\N$ if $E(N(2))=\infty$. In conclusion, it is sufficient to decide whether $E(N(2))$ is finite or not if expecations of arbitrary simple record times are investigated. We summarize this discussion.

\begin{prop}
Let $\bm X_1,\bm X_2,\dotsc$ be iid rv following a copula $C$ on $[0,1]^d$. Then the following implications hold:
\begin{enumerate}[(i)]
\item If $E(N(2))=\infty$, then $E(N(n+1)-N(n))=\infty$ for all $n\in\N$.
\item If $E(N(2))<\infty$, then $E(N(n+1)-N(n))<\infty$ for all $n\in\N$.
\end{enumerate}
\end{prop}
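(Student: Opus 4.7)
Part (ii) is essentially established in the paragraph immediately above the statement: by dropping the indicator $1_{\{\bm X_n \not\le \cdots \not\le \bm X_1\}}$ in \eqref{eq:expectation_of_N(n)} and exploiting independence of the $\bm X_i$, one obtains $E(N(n+1)-N(n)) \le [E(N(2))-1]^n < \infty$. Nothing further is required here.

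For part (i), the case $n=1$ is the hypothesis itself, so I fix $n \ge 2$. My plan is to apply Tonelli to \eqref{eq:expectation_of_N(n)} and integrate over $\bm u_n$ last. Since the constraint $\{\bm u_n \not\le \cdots \not\le \bm u_1\}$ factors as $\{\bm u_n \not\le \bm u_{n-1}\} \cap \{\bm u_{n-1} \not\le \cdots \not\le \bm u_1\}$, this yields
\[
E(N(n+1)-N(n)) = \int_{\{\bm u_{n-1} \not\le \cdots \not\le \bm u_1\}} \prod_{i=1}^{n-1} \frac{1}{1-C(\bm u_i)}\, J(\bm u_{n-1})\, C(\d\bm u_1)\cdots C(\d\bm u_{n-1}),
\]
where $J(\bm v) := \int_{\{\bm u \not\le \bm v\}} (1-C(\bm u))^{-1}\, C(\d\bm u)$.

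The core step is to show $J(\bm v) = +\infty$ for $C$-a.e.\ $\bm v$. I write
\[
J(\bm v) = \bigl(E(N(2))-1\bigr) - \int_{\{\bm u \le \bm v\}} (1-C(\bm u))^{-1}\, C(\d\bm u).
\]
The monotonicity $C(\bm u) \le C(\bm v)$ on $\{\bm u \le \bm v\}$, combined with $C(\{\bm u \le \bm v\}) = C(\bm v)$, bounds the subtracted term by $C(\bm v)/(1-C(\bm v))$, which is finite whenever $C(\bm v) < 1$. Because each marginal is uniform on $(0,1)$, the equality $C(\bm v) = 1$ forces $\bm v = \bm 1$, a $C$-null set. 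Together with the hypothesis $E(N(2)) = \infty$, this forces $J(\bm v) = +\infty$ for $C$-a.e.\ $\bm v$.

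To conclude, I need to verify that the remaining integration region $\{\bm u_{n-1} \not\le \cdots \not\le \bm u_1\}$ has positive $C^{n-1}$-measure; this is the one point where something could go wrong and is the main obstacle to watch for. Here it is handled by noting that the set contains $\{u_{1,1} < u_{2,1} < \cdots < u_{n-1,1}\}$ (a strict increase in the first coordinate precludes $\bm u_j \le \bm u_{j-1}$), which by continuity of the first margin has probability $1/(n-1)!$. On this positive-measure region the nonnegative integrand equals $+\infty$ almost everywhere, so Tonelli delivers $E(N(n+1)-N(n)) = +\infty$, completing (i).
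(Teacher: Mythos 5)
Your proposal is correct, and part (ii) coincides with the paper's treatment (the bound $E(N(n+1)-N(n))\le[E(N(2))-1]^n$ obtained by dropping the indicator in \eqref{eq:expectation_of_N(n)} and using independence). For part (i), however, you take a genuinely different route. The paper proves (i) by showing that the increments $N(n+1)-N(n)$ are \emph{stochastically increasing} in $n$: using the conditional geometric law of Lemma \ref{lem:distribution_of_N(n)} (i) it computes $P(N(n+1)-N(n)\le t)=1-E\bigl(C(\bm X_{N(n)})^{\lfloor t\rfloor}\bigr)$, and the monotonicity of the record sequence then yields $E(N(n+1)-N(n))\le E(N(n+2)-N(n+1))$, so all increments inherit the infinite expectation from $E(N(2)-N(1))=E(N(2))-1=\infty$. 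You instead work directly with the integral representation \eqref{eq:expectation_of_N(n)}: Tonelli lets you isolate the innermost integral $J(\bm v)=\int_{\{\bm u\not\le\bm v\}}(1-C(\bm u))^{-1}\,C(\d\bm u)$, which you show equals $+\infty$ for $C$-a.e.\ $\bm v$ (since the subtracted piece is at most $C(\bm v)/(1-C(\bm v))$ and $C(\bm v)=1$ only on the $C$-null set $\{\bm 1\}$), and you correctly verify the non-obvious point that the remaining region $\{\bm u_{n-1}\not\le\cdots\not\le\bm u_1\}$ has positive mass, via the ordering of first coordinates. Both arguments are valid. The paper's is shorter and delivers the stronger qualitative fact \eqref{eq:stochastically_increasing} that the inter-record times are stochastically increasing; yours is more computational but entirely self-contained given Lemma \ref{lem:distribution_of_N(n)} (ii), and it has the advantage of not invoking the comparison $C(\bm X_{N(n)})\le C(\bm X_{N(n+1)})$, which for \emph{simple} records is cleanest when phrased in terms of the running maxima $\max_{i\le N(n)}\bm X_i$ rather than the record values themselves.
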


\begin{proof}
It remains to proof (ii). We show that $N(n+1)-N(n)$, $n\in\N$, is \emph{stochastically increasing}, i.\,e. for every $t\in\R$ and every $n\in\N$
\begin{equation}\label{eq:stochastically_increasing}
P(N(n+1)-N(n)\leq t)\geq P(N(n+2)-N(n+1)\leq t),\qquad t\in\R,~n\in\N.
\end{equation}
Recall that the df of a rv $X\sim\Geom(p)$ is given by $P(X\leq t)=1-(1-p)^{\lfloor t\rfloor}$, where $\lfloor t\rfloor=\max\{m\in \Z:~m\leq x\}$.  Conditioning on $\bm X_{N(n)}=\bm x$, we obtain by Lemma \ref{lem:distribution_of_N(n)} (i) for each $t\in\R$ and $n\in\N$
\begin{align*}
&P(N(n+1)-N(n)\leq t)\\
&=\int_{[0,1]^d}P\left(N(n+1)-N(n)\leq t|\bm X_{N(n)}=\bm x\right)~(P*\bm X_{N(n)})(\d\bm x)\\
&=\int_{[0,1]^d}1-C(\bm x)^{\lfloor t\rfloor}~(P*\bm X_{N(n)})(\d\bm x)\\
&=1-E\left(C\left(\bm X_{N(n)}\right)^{\lfloor t\rfloor}\right),
\end{align*}
which shows \eqref{eq:stochastically_increasing} since $\bm X_{N(n)}\leq\bm X_{N(n+1)}$. Hence,
\begin{align*}
E(N(n+1)-N(n))&=\int_0^\infty P(N(n+1)-N(n)>t~\d t)\\
&\leq \int_0^\infty P(N(n+2)-N(n+1)>t~\d t)\\
&=E(N(n+2)-N(n+1)).
\end{align*}
\end{proof}

Computing the distribution of the second record $\bfX_{N(2)}$ is an easy task: Let $\bfX_1,\bfX_2\dots$ be independent copies of the rv $\bfX$ in $\R^d$ with df $F$. We make no further assumption on $F$. Conditioning on $\bfX_1=\bfy$ we obtain
\begin{align*}
P\left(\bfX_{N(2)}\le \bfx\right) &= \sum_{j=2}^\infty P(\bfX_j\le \bfx, N(2)=j)\\
&= \sum_{j=2}^\infty \int_{\R^d}P(\bfX_j\le \bfx, \bfX_2\le \bfy,\dots,\bfX_{j-1}\le\bfy,\bfX_j\not\le \bfy)\,F(\d\bfy)\\
&= \sum_{j=2}^\infty \int_{\R^d} F(\bfy)^{j-2}P(\bfX_j\le \bfx,\bfX_j\not\le \bfy)\,F(\d\bfy)\\
&= \int_{\R^d} P(\bfX\le \bfx,\bfX\not\le \bfy) \sum_{j=2}^\infty F(\bfy)^{j-2}\, F(\d\bfy)\\
&= \int_{\R^d} \frac{P(\bfX\le \bfx,\bfX\not\le \bfy)}{1- F(\bfy)}\, F(\d\bfy)\\
&= \int_{\R^d}P(\bfX\le \bfx\mid\bfX\not\le \bfy)\, F(\d\bfy).
\end{align*}

Computation of the distribution of $\bfX_{N(k)}$ for an arbitrary $k\ge 2$ is much more complex, but manageable. Computation of the limit distribution of $\bm X_{N(k)}$, properly linearly standardized, as $k$ tends to infinity, is an open problem. For the univariate case we refer to \citet[Section 6.4]{gal87}.

\end{document}